\documentclass[11pt]{article}
\usepackage[margin=1in]{geometry}

\usepackage{enumerate}

\usepackage{amsmath,amssymb,amsthm}
\usepackage{mathrsfs,amsbsy}

\usepackage{kbordermatrix}

\usepackage{indentfirst}
\usepackage{algorithm,algorithmic}

\usepackage{graphicx}
\usepackage{float}
\usepackage{subcaption}

\usepackage{multirow}

\usepackage[normalem]{ulem}

\usepackage{cprotect}
\usepackage{xcolor}

\usepackage{pgfplots}
\pgfplotsset{compat=newest}

\usepackage{url}
\usepackage{appendix}

\def\T{\mathrm{T}}
\def\F{{\rm F}}
\def\R{{\mathbb R}}

\newcommand{\sig}{\sigma}
\newcommand{\gap}{\gamma}

\newcommand{\lam}{\lambda}
\newcommand{\Lam}{\Lambda}

\newcommand{\fl}[1]{\widehat{#1}}
\newcommand{\mc}[1]{\mathcal{#1}}

\newcommand{\PARAGRAPH}[1]{\medskip}

\DeclareMathOperator{\diag}{diag}

\newcommand{\twobyone}[2]{
        \left[ \begin{array}{c}
        #1  \\  #2
        \end{array} \right] }

\newtheorem{theorem}{\textbf{Theorem}}[section]
\newtheorem{lemma}{\textbf{Lemma}}[section]
\newtheorem{remark}{\textbf{Remark}}[section] 

\newtheorem{example}{\textbf{Example}}[section]

\numberwithin{equation}{section}

        
\title{Backward Stability of Explicit External Deflation 
for the Symmetric Eigenvalue Problem}
\author{Chao-Ping Lin\thanks{Department of Mathematics, 
University of California, Davis, CA 95616, USA, {\tt cplin@ucdavis.edu}} 
\and Ding Lu\thanks{Department of Mathematics, 
University of Kentucky, Lexington, KY 40506, USA, {\tt Ding.Lu@uky.edu}} 
\and Zhaojun Bai\thanks{Department of Mathematics 
and Department of Computer Science, University of California, 
Davis, CA 95616, USA, {\tt zbai@ucdavis.edu}} 
}  
\date{\today} 


\begin{document}

\maketitle

\begin{abstract}
A thorough backward stability analysis of 
Hotelling's deflation, an explicit external deflation procedure 
through low-rank updates for computing many eigenpairs of 
a symmetric matrix, is presented. 
{Computable upper bounds}
of the loss of the orthogonality of the computed eigenvectors and 
the symmetric backward error norm of the computed eigenpairs are 
derived. Sufficient conditions for the backward stability of 
the explicit external deflation procedure are revealed. 
Based on these theoretical results, 
the strategy for achieving numerical backward stability 
by dynamically selecting the shifts is proposed. 
Numerical results are presented to corroborate the theoretical 
analysis and to demonstrate the stability of the 
procedure for computing many eigenpairs
of large symmetric matrices arising from applications. 
\end{abstract}


\section{Introduction}

\PARAGRAPH{Problem.} 
Let $A$ be an $n\times n$ real symmetric matrix with ordered
eigenvalues
$\lambda_1 \leq \lambda_2 \leq \cdots \leq \lambda_n$, and
the corresponding eigenvectors $v_1, v_2, \ldots, v_n$.
Let $\mc{I}=[\lambda_{\rm low}, \lambda_{\rm upper}]$ 
be an interval containing 
the eigenvalues $\lambda_1, \lambda_2, \ldots, \lambda_{n_e}$
at the lower end of the spectrum of $A$.
We are interested in computing the partial eigenvalue decomposition
\begin{equation} \label{eq:pe}  
A V_{n_e} = V_{n_e} \Lambda_{n_e},
\end{equation} 
where $\Lambda_{n_e} 
= \mbox{diag}(\lambda_1, \lambda_2, \ldots, \lambda_{n_e})$, 
$V_{n_e} = [v_1,v_2,\ldots,v_{n_e}]$ 
and $V^{\T}_{n_e} V_{n_e} = I_{n_e}$. 
We are particularly interested in the case where 
the interval is of the width 
$|\mathcal{I}| = \lambda_{\rm upper} - \lambda_{\rm low}
\leq \frac{1}{2}\|A\|_2$,
and may contain a large number of eigenvalues of $A$. 

A majority of subspace projection methods for 
computing the partial eigenvalue decomposition~\eqref{eq:pe},   
such as the Lanczos methods \cite{Parlett1998,Bai2000}, 
typically converge first to the eigenvalues on the periphery of the spectrum.
A projection subspace of a large dimension
is necessary to compute the eigenvalues located deep inside the spectrum.
Maintaining the orthogonality of a large projection subspace is
computationally expensive.
To reduce the dimensions of projection subspaces and accelerate 
the convergence, a number of techniques such as 
the restarting \cite{Sorensen1992,Lehoucq1996,Wu2000} 
and the filtering \cite{LiXi2016} have been developed.


This paper revisits a classical deflation technique 
known as Hotelling's 
deflation \cite{Hotelling1933,Wilkinson1965,Parlett1998}.  
Hotelling's deflation computes the 
partial eigenvalue decomposition~\eqref{eq:pe} through 
a sequence of low-rank updates.
In the simplest case, supposing that the lowest eigenpair 
$(\lam_1,v_1)$ of $A$ is computed by an eigensolver,
by choosing a real shift $\sigma_1$ and defining the rank-one update
matrix 
\begin{align*}
A_1 \equiv A + \sigma_1 v_1v_1^\T,
\end{align*}
the eigenpair $(\lam_1,v_1)$ of $A$ 
is displaced to an eigenpair $(\lam_1+\sigma_1,v_1)$ of $A_1$,
while all the other eigenpairs of $A$ and $A_1$ are the same. 
If the shift $\sigma_1 > \lam_{\rm upper} - \lam_1$,
then the eigenpair $(\lam_1,v_1)$ is shifted outside the interval $\mc{I}$
and the eigenpair $(\lam_2,v_2)$ of $A$ 
becomes the lowest eigenpair of $A_1$. Subsequently, we
can use the eigensolver again to compute the 
lowest eigenpair $(\lam_2,v_2)$ of $A_1$. 
The procedure is repeated until all the eigenvalues
in the interval $\mc{I}$ are found.
Since the low-rank update of the original $A$ can be done outside of 
the eigensolver, we refer to this strategy as 
an explicit external deflation (EED) 
procedure, to distinguish from the deflation techniques used
inside an eigensolver, such as TRLan \cite{Wu2000} 
and ARPACK \cite{Lehoucq1998}.  
Recently the EED has been combined with Krylov subspace methods to 
progressively compute the eigenpairs toward the interior of 
the spectrum of symmetric eigenvalue problems 
\cite{Money2005,Yamazaki2019}, and extended to structured
eigenvalue problems~\cite{Bai2016,Chen2018}.

In the presence of finite precision arithmetic, 
the EED procedure 
is susceptible to numerical instability
due to the accumulation of numerical errors from previous 
computations; see, e.g., \cite[p.~585]{Wilkinson1965} and \cite[p.~96]{Saad2011}. 
In \cite[Chap.~5.1]{Parlett1998}, 
Parlett showed that the change to the smallest eigenvalue 
in magnitude by deflating out the largest computed eigenvalue 
would be at the same order of the round-off error incurred. 
In \cite{Saad1989}, for nonsymmetric matrices, Saad derived 
an upper bound on the backward error norm and
showed that the stability is determined by 
the angle between the computed eigenvectors and the deflated subspace.
Saad concluded that ``if the angle between the computed eigenvector
and the previous invariant subspace is small at every step, 
the process may quickly become unstable. On the other hand, 
if this is not the case then the process is quite safe, 
for small number of requested eigenvalues."

However, none of the existing studies had examined the 
effect of the choice of shifts on the numerical stability.   
In this paper, we present a thorough backward stability analysis 
of the EED procedure. We first derive governing equations 
of the EED procedure in the presence of finite precision arithmetic, 
and then derive upper bounds of the loss of the orthogonality 
of the computed eigenvectors and the symmetric backward error norm of 
the computed eigenpairs. From these upper bounds, 
under mild assumptions, we conclude that 
the EED procedure is backward stable if the shifts $\sigma_j$ are
dynamically chosen such that 
(i) the spectral gaps,
defined as the separation between 
the computed eigenvalues and shifted ones, 
are maintained at the same order of the norm of the matrix $A$, and 
(ii) the shift-gap ratios, defined as the ratios between
the largest shift in magnitude and the spectral gaps, 
are at the order of one. 
Our analysis implies that the re-orthogonalization
of computed eigenvectors and
the large angles between the computed eigenvectors
and the previous invariant subspaces are unnecessary.
Based on theoretical analysis, 
we provide a practical shift selection scheme 
to guarantee the backward stability.
For a set of test matrices from applications, we demonstrate that the 
EED in combination with an established eigensolver can 
compute a large number of eigenvalues with backward stability.

The rest of the paper is organized as follows. 
In Section~\ref{sec:EED}, we present the governing equations  
of the EED in finite precision arithmetic.
In Section~\ref{sec:backErr}, we derive the upper bounds
on the loss of orthogonality of computed eigenvectors and
the symmetric backward error norm of computed eigenpairs.   
From these upper bounds, we derive conditions for the backward stability
of the EED procedure. 
In Section~\ref{sec:EEDpractice}, we discuss how to 
properly choose the shifts to satisfy the stability conditions
and outline an algorithm for combining the EED procedure with 
an eigensolver.  Numerical results to verify the
sharpness of the upper bounds and to demonstrate the backward 
stability of the EED for symmetric eigenvalue problems from
applications are presented in Section~\ref{sec:examples}.
Concluding remarks are given in section~\ref{sec:conclude}.

Following the convention of matrix computations,
we use the upper case letters for matrices and the lower case letters for vectors.
In particular, we use $I_n$ for the identity matrix of dimension $n$.
If not specified, the dimensions of matrices and vectors 
conform to the dimensions used in the context. 
We use $\cdot^\T$ for transpose, and $\|\cdot\|_2$ and $\|\cdot\|_\F$ 
for $2$-norm and the Frobenius norm, respectively.
The range of a matrix $A$ is denoted by $\mc{R}(A)$.
We also use $\sigma_{\min}(\cdot)$ to denote the minimal singular values of a matrix.
Other notations will be explained as used.

%
%

\section{EED in finite precision arithmetic} \label{sec:EED}

Suppose that we use an established eigensolver called EIGSOL,  
such as TRLan \cite{Wu2000} and ARPACK \cite{Lehoucq1998}, 
to successfully compute the lowest eigenpair $(\fl{\lam}, \fl{v})$ of $A$ with 
\begin{equation*}
A\fl{v} = \fl{\lam}\fl{v} + \eta, 
\end{equation*} 
where $\|\fl{v}\|_2 = 1$ and the residual vector $\eta$ satisfies 
\begin{equation} \label{eq:toldef} 
\|\eta\|_2 \leq tol\cdot \|A\|_2
\end{equation} 
for a prescribed convergence tolerance $tol$.
The EED procedure starts with computing the lowest eigenpair 
$(\fl{\lam}_1,\fl{v}_1)$ of $A$ by EIGSOL satisfying 
\begin{align*}
A\fl{v}_1 = \fl{\lam}_1\fl{v}_1 + \eta_1,
\end{align*}
where $\fl{\lam}_1 \in \mc{I}=[\lambda_{\rm low}, \lambda_{\rm upper}]$,
and the residual vector $\eta_1$ satisfies \eqref{eq:toldef}.
At the first EED step, we choose a shift $\sigma_1$ and define 
\begin{align*}
\fl{A}_1\equiv A + \sigma_1\fl{v}_1\fl{v}_1^\T.
\end{align*}
By choosing the shift $\sigma_1 > \lambda_{\rm upper} - \fl{\lam}_1$,  
the lowest eigenpair of $\fl{A}_1$ is an approximation of 
the second eigenpair $({\lam}_2, {v}_2)$ of $A$.
Subsequently, we use EIGSOL to compute the lowest eigenpair 
$(\fl{\lam}_2,\fl{v}_2)$ of $\fl{A}_1$ satisfying 
\begin{align*}
\fl{A}_1 \fl{v}_2 = \fl{\lam}_2\fl{v}_2 + \eta_2,
\end{align*}
where
the residual vector $\eta_2$ satisfies \eqref{eq:toldef}.
Meanwhile, expressing the computed eigenpair 
$(\fl{\lam}_1,\fl{v}_1)$ in terms of $\fl{A}_1$, we have
\[
\fl{A}_1 \fl{v}_1 =  (\fl{\lam}_1 + \sigma_1)\fl{v}_1 + \eta_1.
\] 
Proceeding to the second EED step, we choose a shift $\sigma_2$ and define 
\[
\fl{A}_2\equiv \fl{A}_1 + \sigma_2\fl{v}_2\fl{v}_2^\T 
	= A + \fl{V}_2\Sigma_2\fl{V}_2^\T,
\]
where 
$\fl{V}_2 = [\fl{v}_1,\fl{v}_2]$ and $\Sigma_2 = \diag(\sigma_1,\sigma_2)$.
By choosing the shift $\sigma_2 > \lambda_{\rm upper} - \fl{\lam}_2$,  
the lowest eigenpair of $\fl{A}_2$ is 
an approximation of the third eigenpair $({\lam}_3, {v}_3)$ of $A$.
Then we use EIGSOL again to compute the lowest eigenpair 
$(\fl{\lam}_3,\fl{v}_3)$ of $\fl{A}_2$ satisfying 
\begin{align*}
\fl{A}_2 \fl{v}_3 = \fl{\lam}_3\fl{v}_3 + \eta_3,
\end{align*}
where 
the residual vector $\eta_3$ satisfies \eqref{eq:toldef}. 
Meanwhile, expressing the computed eigenpairs 
$(\fl{\lam}_1,\fl{v}_1)$ and $(\fl{\lam}_2,\fl{v}_2)$ 
in terms of $\fl{A}_2$, we have
\begin{align*}  
\fl{A}_2\fl{V}_2 =
\fl{V}_2(\fl{\Lam}_2 + \Sigma_2) + \fl{V}_2\Sigma_2\Phi_2 + E_2,
\end{align*}
where $\fl{\Lam}_2 = \diag(\fl{\lam}_1,\fl{\lam}_2)$,
$E_2 = [\eta_1,\eta_2]$, 
and $\Phi_2\in\R^{2\times 2}$ is the strictly lower triangular part of
the matrix $\fl{V}_2^\T\fl{V}_2 - I_2$, i.e., 
$\Phi_2 + \Phi^\T_2 =  \fl{V}_2^\T\fl{V}_2 - I_2$.

In general, at the $j$-th EED step, 
we choose a shift $\sigma_j$ and define
\begin{align} \label{eq:defAj} 
\fl{A}_{j} \equiv \fl{A}_{j-1} + \sigma_{j}\fl{v}_{j}\fl{v}_{j}^\T 
        = A + \fl{V}_{j}\Sigma_{j}\fl{V}_{j}^\T,
\end{align}
where $\fl{V}_{j}\equiv[\fl{v}_1,\ldots,\fl{v}_{j}]$
and $\Sigma_j=\diag(\sigma_1,\ldots,\sigma_j)$ with
$\fl{A}_0\equiv A$.
Then by choosing the shift $\sigma_j > \lambda_{\rm upper} - \fl{\lam}_j$,  
the lowest eigenpair of $\fl{A}_j$ 
is an approximation of the $(j+1)$-th eigenpair $({\lam}_{j+1}, {v}_{j+1})$ of $A$.
We use EIGSOL to compute the lowest eigenpair 
$(\fl{\lam}_{j+1}, \fl{v}_{j+1})$ of $\fl{A}_j$ satisfying 
\begin{align}  \label{eq:gov2b}
\fl{A}_j\fl{v}_{j+1} = \fl{\lam}_{j+1}\fl{v}_{j+1} + \eta_{j+1},
\end{align}
where $\|\fl{v}_{j+1}\|_2 = 1$ and 
the residual vector $\eta_{j+1}$ satisfies \eqref{eq:toldef}, i.e.,
\begin{align}  \label{eq:jtol}
\|\eta_{j+1}\|_2 \leq tol\cdot \|A\|_2.
\end{align} 
Meanwhile, for the computed eigenpairs $(\fl{\lam}_j,\fl{v}_j)$ 
in terms of $\fl{A}_j$, we have 
\begin{align} \label{eq:gov0}
\fl{A}_j\fl{v}_j = \fl{A}_{j-1}\fl{v}_{j} + \sigma_j\fl{v}_j
= (\lam_j + \sigma_j)\fl{v}_j + \eta_j,
\end{align}
and for the computed eigenpairs $(\fl{\lam}_i,\fl{v}_i)$
with $1\leq i\leq j-1$ in terms of $\fl{A}_j$, we have 
\begin{align}
\fl{A}_j\fl{v}_{i} 
& = \left( \fl{A}_{j-1} + \sigma_{j}\fl{v}_{j}\fl{v}_{j}^\T\right) 
\fl{v}_{i} \nonumber \\ 
& = \left(\fl{A}_{i-1} + \sigma_i\fl{v}_{i}\fl{v}_i^\T 
+ \fl{V}_{i+1:j}\Sigma_{i+1:j}\fl{V}_{i+1:j}^\T \right)\fl{v}_{i} \nonumber \\
& = \left(\fl{A}_{i-1} + \sigma_i\fl{v}_{i}\fl{v}_i^\T \right)\fl{v}_i  
+ \fl{V}_{i+1:j}\Sigma_{i+1:j}\fl{V}_{i+1:j}^\T\fl{v}_{i} \nonumber \\
& = \fl{\lam}_{i}\fl{v}_{i} + \eta_{i} + \sigma_i\fl{v}_i 
        + \fl{V}_{i+1:j}\Sigma_{i+1:j}\fl{V}_{i+1:j}^\T\fl{v}_{i}  \nonumber \\
& = (\fl{\lam}_{i} + \sigma_i)\fl{v}_i + 
\fl{V}_{i+1:j}\Sigma_{i+1:j}\fl{V}_{i+1:j}^\T\fl{v}_{i} + \eta_i  \nonumber \\
& = (\fl{\lam}_{i} + \sigma_i)\fl{v}_i + \fl{V}_j\Sigma_j\twobyone{0}{\fl{V}_{i+1:j}^\T\fl{v}_{i}} + \eta_i,
\label{eq:gov1}
\end{align}
where $\fl{V}_{i+1:j} \equiv [\fl{v}_{i+1},\ldots,\fl{v}_j]$
and $\Sigma_{i+1:j} \equiv \diag(\sigma_{i+1},\ldots,\sigma_j)$.

Combining \eqref{eq:gov0} and \eqref{eq:gov1}, we have
\begin{align}  \label{eq:gov2a}
\fl{A}_j\fl{V}_j = 
\fl{V}_j(\fl{\Lam}_j + \Sigma_j) + \fl{V}_j\Sigma_j\Phi_j + E_j,
\end{align}
where 
$\fl{\Lam}_j=\diag(\fl{\lam}_1,\ldots,\fl{\lam}_j)$, 
$E_j = [\eta_1,\ldots,\eta_j]$,  $\Phi_j$ is the strictly lower 
triangular part of the matrix $\fl{V}_j^\T\fl{V}_j - I_j$ and 
$\Phi_j + \Phi^\T_j =  \fl{V}_j^\T\fl{V}_j - I_j$.
Eqs.~\eqref{eq:gov2b} and~\eqref{eq:gov2a} are referred to as
{\em the governing equations of $j$ steps of (inexact) EED procedure.}


For the backward stability analysis of the EED procedure, 
we introduce the following two quantities associated with the shifts
$\sigma_1,\dots,\sigma_j$ for a $j$-step EED: 
\begin{itemize} 
\item the {\em spectral gap} of $\fl{A}_j$, 
defined as the separation between the computed eigenvalues and the
shifted ones: 
\begin{equation}  \label{eq:gapdef}
\gap_j \equiv \min_{\lam\in\mc{I}_{j+1}, \theta\in\mc{J}_{j}} 
| \lam - \theta | > 0, 
\end{equation}
where 
$\mc{I}_{j+1} \equiv \{\fl{\lam}_1,\ldots, \fl{\lam}_{j}, \fl{\lam}_{j+1}\}$, 
the set of computed eigenvalues, and 
$\mc{J}_{j} \equiv \{\fl{\lam}_1+\sigma_1,\ldots,\fl{\lam}_{j}+\sigma_j\}$,
the set of computed eigenvalues after shifting
(see Figure~\ref{fig:gap} for an illustration);

\begin{figure}
\centering
\begin{tikzpicture}
\begin{axis}[
	width=0.80\textwidth,
	height=0.2\textwidth,
	hide y axis,
	axis lines = center,
	xmin = -3,
	xmax = 3,
	ymin = -1.0,
	ymax = 1.0,
	xtick = {-2.6,-2.4,-1.8,-1.7,-1.65,-1.3,1.0,1.05,1.8,2.1,2.15,2.6},
	xticklabels = {$\fl{\lam}_{1}$, ,$\fl{\lam}_i$, , ,$\fl{\lam}_{j+1}$,
			     $\fl{\lam}_1+\sig_1$, ,$\fl{\lam}_i+\sig_i$, , ,$\fl{\lam}_{j}+\sig_j$},
	xticklabel style={text height=2ex},
	major tick length=0.2cm,
	every tick/.style={
		black,
		thick,
	}
]
\draw[<->] (-1.27,0.3) -- (-0.15,0.3) node[above] {$\gap_j$} -- (0.97,0.3);
\end{axis}
\end{tikzpicture}
\caption{Illustration of the spectral gap $\gap_j$} \label{fig:gap} 
\end{figure}
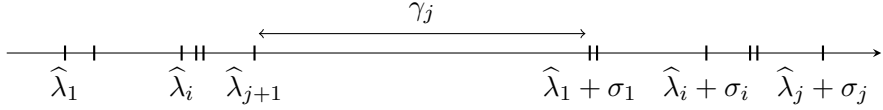

\item the {\em shift-gap ratio}, defined as the ratio 
of the largest shift 
and the spectral gap $\gap_j$:
\begin{equation}  \label{eq:taudef}
\tau_j \equiv 
\frac{1}{\gap_j} \cdot \max_{1\leq i\leq j} |\sigma_i|.
\end{equation}
\end{itemize}
We will see that $\gap_j$ and $\tau_j$ are crucial quantities to characterize
the backward stability of the EED procedure.

%
%
\section{Backward stability analysis of EED}  \label{sec:backErr}

In this section, we first review the notion of backward stability for
the computed eigenpairs. Then we derive upper bounds 
on the loss of orthogonality of computed eigenvectors 
and the backward error norm of computed eigenpairs
by the EED procedure, and reveal conditions for the backward stability of the
procedure. 

\subsection{Notion of backward stability} 
By the well-established notion of backward stability analysis
in numerical linear algebra \cite{Higham1998,Sun1995}, 
the following two quantities measure the accuracy of 
the approximate eigenpairs $(\fl{\Lam}_{j+1}, \fl{V}_{j+1})$ of $A$
computed by the EED procedure:
\begin{itemize}
\item 
the loss of orthogonality of the computed eigenvectors $\fl{V}_{j+1}$,
\begin{align}  \label{eq:omgdef}
\omega_{j+1}\equiv\|\fl{V}_{j+1}^\T\fl{V}_{j+1} - I_{j+1}\|_{\rm F},
\end{align}

\item 
the symmetric backward error norm of the computed
eigenpairs $(\fl{\Lam}_{j+1},\fl{V}_{j+1})$,
\begin{align} \label{eq:deltadef} 
\delta_{j+1}\equiv\min_{\Delta\in\mc{H}_{Q_{j+1}}} \|\Delta\|_\F,
\end{align}
where $\mc{H}_{Q_{j+1}}$ is the set of the symmetric backward errors
for the orthonormal basis $Q_{j+1}$ from the polar decomposition\footnote{The
polar decomposition of a matrix $X$ is given by $X=UP$ where $U$ is
orthonormal and $P$ is symmetric positive semi-definite.}
of $\fl{V}_{j+1}$, namely,
\begin{align}  \label{eq:Hdef}
\mc{H}_{Q_{j+1}} \equiv 
        \left\{
        \Delta \mid 
        (A + \Delta)Q_{j+1} = Q_{j+1}\fl{\Lam}_{j+1},\ 
        \Delta = \Delta^\T \in \R^{n\times n}
        \right\}.
\end{align}
\end{itemize}

For a prescribed tolerance $tol$ of 
the stopping criterion \eqref{eq:jtol} for an eigensolver EIGSOL,
the EED procedure is considered to be {\em backward stable} if 
\begin{align}  \label{eq:goal1}
\omega_{j+1} = O(tol)
\end{align}
and
\begin{align}  \label{eq:goal2}
\delta_{j+1} = O(tol\cdot\|A\|_2),
\end{align}
where the constants in the big-O notations
are low-degree polynomials in the number $j$ of the EED steps.

\subsection{Loss of orthogonality}\label{subsec:lossorth} 

%
We first prove the following lemma to reveal the structure 
of the orthogonality between the computed eigenvectors.
%
%
\begin{lemma} \label{lemma:cosine}
By the governing equations 
\eqref{eq:gov2b} and \eqref{eq:gov2a} of $j$ steps of EED,
if $\tau_j \omega_j < \sqrt{2}$, then for $i=1,2,\dots, j$, 
the matrices $\Gamma_i \equiv \fl{\Lam}_i + \Sigma_i - \fl{\lam}_{i+1} I_i$ 
and $I_i + \Phi_i^\T\Sigma_i\Gamma_i^{-1}$ are non-singular, and 
\begin{align}  \label{eq:cosine}
\fl{V}_i^\T\fl{v}_{i+1} =
\Gamma_i^{-1}\left(I_i + \Phi_i^\T\Sigma_i\Gamma_i^{-1}\right)^{-1}
\left[ \fl{V}_{i}^\T\eta_{i+1} - E_i^\T\fl{v}_{i+1} \right].
\end{align}
Furthermore,
\begin{enumerate}[(i)]  
\item \label{eq:gammabnd} 
$\|\Gamma_i^{-1}\|_2 \leq \gamma_{j}^{-1}$,

\item \label{eq:matplusbnd}
$\|(I_i + \Phi_i^\T\Sigma_i\Gamma_i^{-1})^{-1}\|_2 
\leq (1 - \tau_j\omega_j/\sqrt{2})^{-1}$,
\end{enumerate}
where  $\gap_j$ and $\tau_j$ are the spectral gap and
the shift-gap ratio defined in \eqref{eq:gapdef} and \eqref{eq:taudef}, 
respectively, and $\omega_j$ is the loss of the orthogonality defined 
in \eqref{eq:omgdef}.  
\end{lemma}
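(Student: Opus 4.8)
The plan is to derive \eqref{eq:cosine} by isolating $\fl{V}_i^\T \fl{v}_{i+1}$ from the governing equations, and then to establish the two norm bounds \eqref{eq:gammabnd} and \eqref{eq:matplusbnd} by direct estimation.

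First I would left-multiply the eigen-residual equation \eqref{eq:gov2b} written at step $i$, namely $\fl{A}_i \fl{v}_{i+1} = \fl{\lam}_{i+1}\fl{v}_{i+1} + \eta_{i+1}$, by $\fl{V}_i^\T$, obtaining $\fl{V}_i^\T \fl{A}_i \fl{v}_{i+1} = \fl{\lam}_{i+1}\fl{V}_i^\T\fl{v}_{i+1} + \fl{V}_i^\T\eta_{i+1}$. The key is to re-express $\fl{V}_i^\T \fl{A}_i$ using the governing equation \eqref{eq:gov2a} at step $i$: transposing $\fl{A}_i\fl{V}_i = \fl{V}_i(\fl{\Lam}_i + \Sigma_i) + \fl{V}_i\Sigma_i\Phi_i + E_i$ and using symmetry of $\fl{A}_i$ gives $\fl{V}_i^\T\fl{A}_i = (\fl{\Lam}_i + \Sigma_i)\fl{V}_i^\T + \Phi_i^\T\Sigma_i\fl{V}_i^\T + E_i^\T$. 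Substituting this and collecting the terms containing $\fl{V}_i^\T\fl{v}_{i+1}$ on the left should produce
\[
\left[ (\fl{\Lam}_i + \Sigma_i - \fl{\lam}_{i+1}I_i) + \Phi_i^\T\Sigma_i \right]\fl{V}_i^\T\fl{v}_{i+1} = \fl{V}_i^\T\eta_{i+1} - E_i^\T\fl{v}_{i+1},
\]
where I recognize the leading matrix as $\Gamma_i + \Phi_i^\T\Sigma_i$. Factoring out $\Gamma_i$ on the left as $\Gamma_i(I_i + \Gamma_i^{-1}\Phi_i^\T\Sigma_i)$ — or, to match the stated form, factoring on the right as $(I_i + \Phi_i^\T\Sigma_i\Gamma_i^{-1})\Gamma_i$ after a suitable manipulation — and inverting yields \eqref{eq:cosine}. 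I would need to be slightly careful that the factorization matches the exact placement of $\Gamma_i^{-1}$ in the stated identity; the right-factorization $\Gamma_i + \Phi_i^\T\Sigma_i = (I_i + \Phi_i^\T\Sigma_i\Gamma_i^{-1})\Gamma_i$ is the natural one, so after left-inverting I expect $\Gamma_i^{-1}(I_i + \Phi_i^\T\Sigma_i\Gamma_i^{-1})^{-1}$ acting on the right-hand side, exactly as claimed.

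For bound \eqref{eq:gammabnd}, $\Gamma_i = \fl{\Lam}_i + \Sigma_i - \fl{\lam}_{i+1}I_i$ is diagonal with entries $\fl{\lam}_k + \sigma_k - \fl{\lam}_{i+1}$ for $k=1,\dots,i$, so $\|\Gamma_i^{-1}\|_2 = 1/\min_k |\fl{\lam}_k + \sigma_k - \fl{\lam}_{i+1}|$. Each such entry is the distance from a shifted eigenvalue $\fl{\lam}_k+\sigma_k \in \mc{J}_i \subseteq \mc{J}_j$ to the computed eigenvalue $\fl{\lam}_{i+1}\in \mc{I}_{i+1}\subseteq \mc{I}_{j+1}$, hence bounded below in absolute value by the spectral gap $\gap_j$ from \eqref{eq:gapdef}; this gives $\|\Gamma_i^{-1}\|_2 \le \gap_j^{-1}$ and simultaneously confirms $\Gamma_i$ is nonsingular. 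For bound \eqref{eq:matplusbnd}, I would estimate $\|\Phi_i^\T\Sigma_i\Gamma_i^{-1}\|_2 \le \|\Phi_i\|_2 \|\Sigma_i\|_2 \|\Gamma_i^{-1}\|_2$, controlling $\|\Sigma_i\|_2 = \max_{k\le i}|\sigma_k| \le \max_{k\le j}|\sigma_k| = \tau_j\gap_j$ and $\|\Phi_i\|_2 \le \|\Phi_i\|_\F$. Since $\Phi_i$ is the strictly lower-triangular part of the symmetric matrix $\fl{V}_i^\T\fl{V}_i - I_i$, and that matrix splits as $\Phi_i + \Phi_i^\T$ with the diagonal of $\fl{V}_i^\T\fl{V}_i - I_i$ being zero (unit columns), the off-diagonal mass is split evenly, so $\|\Phi_i\|_\F = \tfrac{1}{\sqrt 2}\|\fl{V}_i^\T\fl{V}_i - I_i\|_\F \le \tfrac{1}{\sqrt 2}\omega_i \le \tfrac{1}{\sqrt 2}\omega_j$, the last step using monotonicity of $\omega$ in the step index. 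Multiplying these bounds gives $\|\Phi_i^\T\Sigma_i\Gamma_i^{-1}\|_2 \le \tau_j\omega_j/\sqrt 2 < 1$ under the hypothesis, so a Neumann-series argument yields nonsingularity of $I_i + \Phi_i^\T\Sigma_i\Gamma_i^{-1}$ together with $\|(I_i+\Phi_i^\T\Sigma_i\Gamma_i^{-1})^{-1}\|_2 \le (1-\tau_j\omega_j/\sqrt 2)^{-1}$.

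The main obstacle I anticipate is the bookkeeping in the first step: correctly using the symmetry of $\fl{A}_i$ to transpose \eqref{eq:gov2a} and getting the factor ordering right so that $\Gamma_i^{-1}$ and $(I_i+\Phi_i^\T\Sigma_i\Gamma_i^{-1})^{-1}$ appear in precisely the stated positions, rather than their transposed or reversed counterparts. The two norm bounds are then essentially routine, with the only subtle point being the clean identity $\|\Phi_i\|_\F = \tfrac{1}{\sqrt 2}\|\fl{V}_i^\T\fl{V}_i - I_i\|_\F$, which relies on the diagonal of $\fl{V}_i^\T\fl{V}_i - I_i$ vanishing because each $\fl{v}_k$ has unit norm — a fact I would verify explicitly since it is what produces the sharp $\sqrt 2$ constant in \eqref{eq:matplusbnd}.
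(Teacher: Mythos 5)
Your proposal is correct and follows essentially the same route as the paper's own proof: left-multiplying \eqref{eq:gov2b} by $\fl{V}_i^\T$, transposing \eqref{eq:gov2a} via symmetry of $\fl{A}_i$ to obtain $(\Gamma_i + \Phi_i^\T\Sigma_i)\fl{V}_i^\T\fl{v}_{i+1} = \fl{V}_i^\T\eta_{i+1} - E_i^\T\fl{v}_{i+1}$, bounding $\sigma_{\min}(\Gamma_i) \geq \gap_j$ via the gap definition, and using $\|\Phi_i^\T\|_\F = \omega_i/\sqrt{2}$ (from the zero diagonal of $\fl{V}_i^\T\fl{V}_i - I_i$) together with $\|\Sigma_i\|_2 \leq \tau_j\gap_j$ and a Neumann-series argument for \eqref{eq:matplusbnd}. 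The factor ordering you settle on, $\Gamma_i + \Phi_i^\T\Sigma_i = (I_i + \Phi_i^\T\Sigma_i\Gamma_i^{-1})\Gamma_i$, is exactly how the paper obtains \eqref{eq:cosine}.
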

%
%
\begin{proof} 
By the governing equations 
\eqref{eq:gov2b} and \eqref{eq:gov2a} of $j$ steps of EED, 
for $1 \leq i \leq j$, we have  
\begin{align*}
\fl{V}_i^\T\fl{A}_i\fl{v}_{i+1} 
= \fl{\lam}_{i+1}\fl{V}_i^\T\fl{v}_{i+1} + \fl{V}_i^\T\eta_{i+1}
\end{align*} 
and 
\begin{align*}
\fl{V}_i^\T\fl{A}_i\fl{v}_{i+1} 
= (\fl{\Lam}_i + \Sigma_i)\fl{V}_i^\T\fl{v}_{i+1} 
+ \Phi_i^\T\Sigma_i\fl{V}_i^\T\fl{v}_{i+1} + E_i^\T\fl{v}_{i+1}.
\end{align*}
Consequently,
\begin{align}  \label{eq:gov4}
(\Gamma_i + \Phi_i^\T\Sigma_i)\fl{V}_i^\T\fl{v}_{i+1}
= \fl{V}_{i}^\T\eta_{i+1} - E_i^\T\fl{v}_{i+1},
\end{align}
where $\Gamma_i = \fl{\Lam}_i + \Sigma_i - \fl{\lam}_{i+1} I_i$ is 
a diagonal matrix.

By the definition \eqref{eq:gapdef} of the spectral gap $\gap_j$,
\begin{align}  \label{eq:(i)}
\sigma_{\min}(\Gamma_i) = 
\min_{1\leq k\leq i} 
|\fl{\lam}_{k} + \sigma_{k} - \fl{\lam}_{i+1}| \geq \gap_j > 0.
\end{align}
Hence the matrix $\Gamma_i$ is non-singular
and the bound \eqref{eq:gammabnd} holds. 

Since $\Phi_i$ is the strictly lower triangular 
part of the matrix $\fl{V}_i^\T\fl{V}_i - I_i$,
\begin{align*}
\|\Phi_i^\T\|_2\leq\|\Phi_i^\T\|_{\rm F} 
	= \frac{\omega_i}{\sqrt{2}} \leq \frac{\omega_j}{\sqrt{2}}.
\end{align*}
Consequently, 
\begin{align}  \label{eq:gaperr}
\|\Phi_i^\T\Sigma_i\Gamma_i^{-1}\|_2 
	\leq \|\Phi_i^\T\|_2\|\Gamma_i^{-1}\|_2\|\Sigma_i\|_2 
    \leq \frac{\omega_j}{\sqrt{2}} \cdot \gamma_j^{-1}\cdot {\|\Sigma_j\|_2}
	= \frac{\omega_j}{\sqrt{2}}\cdot\tau_j < 1, 
\end{align}
where for the last inequality, we use the assumption  $\tau_j \omega_j < \sqrt{2}$.
By \eqref{eq:gaperr}, the matrix $I_i + \Phi_i^\T\Sigma_i\Gamma_i^{-1}$ 
is non-singular and the bound \eqref{eq:matplusbnd} holds due to 
$\|(I + X)^{-1}\|_2 \leq (1 - \|X\|_2)^{-1}$ for any $X$ with $\|X\|_2<1$.

Since both matrices $\Gamma_i$ and 
$I_i + \Phi_i^\T\Sigma_i\Gamma_i^{-1}$ 
are invertible, the identity \eqref{eq:cosine} follows from 
\eqref{eq:gov4}.
\end{proof}

%
%
Next we exploit the structure of the product $\fl{V}_i^\T\fl{v}_{i+1}$ 
to derive a computable upper bound 
on the loss of orthogonality $\omega_{j+1}$ 
of computed eigenvectors $\fl{V}_{j+1}$.

\begin{theorem} \label{thm:omg}
By the governing equations
\eqref{eq:gov2b} and \eqref{eq:gov2a} of $j$ steps of EED,
if $\tau_j \omega_j < \sqrt{2}$, then the loss of orthogonality 
$\omega_{j+1}$ of the computed
eigenvectors $\fl{V}_{j+1}$ defined in \eqref{eq:omgdef} satisfies 
\begin{align}  \label{eq:omgbnd}
\omega_{j+1} \leq 2\, 
\frac{c_j}{\gap_j} \left(1 + 2\frac{c_j}{\gap_j}\|E_{j+1}\|_{\rm F} \right)
\|E_{j+1}\|_{\rm F}, 
\end{align}
where $c_j = (1 - \tau_j\omega_j/\sqrt{2})^{-1}$,
and $\gap_j$ and $\tau_j$ are the spectral gap and
the shift-gap ratio defined in \eqref{eq:gapdef} and \eqref{eq:taudef},
respectively.
\end{theorem}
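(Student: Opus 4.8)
The plan is to bound the Frobenius norm of $\fl{V}_{j+1}^\T\fl{V}_{j+1}-I_{j+1}$ by exploiting its block structure. Writing $\fl{V}_{j+1}=[\fl{V}_j,\fl{v}_{j+1}]$, the off-diagonal blocks of $\fl{V}_{j+1}^\T\fl{V}_{j+1}-I_{j+1}$ are exactly $\fl{V}_j^\T\fl{v}_{j+1}$ (and its transpose), while the leading $j\times j$ block is $\fl{V}_j^\T\fl{V}_j-I_j$ whose Frobenius norm is $\omega_j$, and the trailing diagonal entry $\|\fl{v}_{j+1}\|_2^2-1$ vanishes since $\|\fl{v}_{j+1}\|_2=1$. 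The cleaner route, which I expect the authors take, is an inductive/telescoping estimate: Lemma~\ref{lemma:cosine} gives a closed form for each cross term $\fl{V}_i^\T\fl{v}_{i+1}$, so I would first extract a clean bound on $\|\fl{V}_i^\T\fl{v}_{i+1}\|_2$ from \eqref{eq:cosine} and then assemble these across $i=1,\dots,j$.

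First I would take norms in the identity \eqref{eq:cosine}. Using submultiplicativity together with the two bounds \eqref{eq:gammabnd} and \eqref{eq:matplusbnd} from Lemma~\ref{lemma:cosine}, namely $\|\Gamma_i^{-1}\|_2\le\gap_j^{-1}$ and $\|(I_i+\Phi_i^\T\Sigma_i\Gamma_i^{-1})^{-1}\|_2\le c_j$, I get
\begin{align*}
\|\fl{V}_i^\T\fl{v}_{i+1}\|_2
\le \frac{c_j}{\gap_j}\,\|\fl{V}_i^\T\eta_{i+1}-E_i^\T\fl{v}_{i+1}\|_2.
\end{align*}
The bracketed residual term I would control by $\|\fl{V}_i^\T\eta_{i+1}\|_2+\|E_i^\T\fl{v}_{i+1}\|_2\le(\|\fl{V}_i\|_2+\|E_i\|_2)\|E_{j+1}\|_{\rm F}$, since each $\eta_{i+1}$ is a column of $E_{j+1}$ and each $\fl{v}_{i+1}$ is a unit vector; here $\|E_i\|_{\rm F}\le\|E_{j+1}\|_{\rm F}$. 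The factor $\|\fl{V}_i\|_2$ must itself be bounded in terms of $\omega_j$: since $\|\fl{V}_i\|_2^2=\|\fl{V}_i^\T\fl{V}_i\|_2\le 1+\|\fl{V}_i^\T\fl{V}_i-I_i\|_2\le 1+\omega_j$, and more simply one can use the crude but clean $\|\fl{V}_i\|_2\le\sqrt{1+\omega_j}$ or absorb it into the quadratic correction term. This is the step that produces the two-term structure $(1+2\frac{c_j}{\gap_j}\|E_{j+1}\|_{\rm F})$ in \eqref{eq:omgbnd}: the leading $1$ comes from the $\|\fl{V}_i\|_2\approx 1$ contribution, and the second term comes from feeding $\|E_i\|_{\rm F}\le\|E_{j+1}\|_{\rm F}$ back in.

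Next I would reassemble $\omega_{j+1}$ from the blocks. The cross-block contributes $2\sum_{i=1}^{j}\|\fl{V}_i^\T\fl{v}_{i+1}\|$ after accounting for both the block and its transpose; more carefully, $\omega_{j+1}^2=\omega_j^2+2\|\fl{V}_j^\T\fl{v}_{j+1}\|_2^2$ relates consecutive steps, suggesting an induction where the bound on $\|\fl{V}_j^\T\fl{v}_{j+1}\|$ feeds the recursion. To land exactly on the clean form \eqref{eq:omgbnd} rather than a telescoped sum, I suspect the argument uses that the dominant off-diagonal mass is captured by the final cross term together with the inductive hypothesis on $\omega_j$, collapsing the recursion into the stated closed form with the factor $2$ out front. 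The main obstacle, as I see it, is precisely this bookkeeping: keeping the quadratic self-consistent term $2\frac{c_j}{\gap_j}\|E_{j+1}\|_{\rm F}$ honest while converting the vector bounds on individual $\fl{V}_i^\T\fl{v}_{i+1}$ into a single Frobenius-norm bound on the whole Gram-matrix defect. The subtlety is that $c_j$ and the bound depend on $\omega_j$, so the estimate is implicit; one must verify the telescoping does not blow up the constant and that replacing $\|\fl{V}_i\|_2$ by its $\omega_j$-dependent bound stays consistent with the hypothesis $\tau_j\omega_j<\sqrt{2}$. I would be careful to ensure the final inequality is a genuine a~priori bound and not merely an implicit relation, which is the delicate point where the structure of \eqref{eq:omgbnd} as a computable upper bound really gets used.
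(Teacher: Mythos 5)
There is a genuine gap. Your plan bounds each cross term $\|\fl{V}_i^\T\eta_{i+1}-E_i^\T\fl{v}_{i+1}\|_2$ separately by the triangle inequality and then assembles them through the recursion $\omega_{j+1}^2=\omega_j^2+2\|\fl{V}_j^\T\fl{v}_{j+1}\|_2^2$. This is exactly the ``straightforward way'' that the paper's own remark following Theorem~\ref{thm:omg} describes and rejects as pessimistic, and it cannot produce \eqref{eq:omgbnd}: once you split $\|\fl{V}_i^\T\eta_{i+1}-E_i^\T\fl{v}_{i+1}\|_2\le\|\fl{V}_i\|_2\|\eta_{i+1}\|_2+\|E_i\|_\F$ and sum over $i$, the accumulated term $\sum_{i=1}^j\|E_i\|_\F^2$ is of order $j\,\|E_{j+1}\|_\F^2$ (since $\|E_i\|_\F^2\approx i\cdot tol^2\|A\|_2^2$), so the best you get is $\omega_{j+1}\lesssim (c_j/\gap_j)\sqrt{j}\,\|E_{j+1}\|_\F$ --- off by a factor $\sqrt{j}$ from the stated bound, and with the wrong structural form. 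Your hope that ``the dominant off-diagonal mass is captured by the final cross term'' is not how the recursion collapses; no single term dominates, and no inductive bookkeeping of this type recovers the factor $2\,(c_j/\gap_j)(1+2(c_j/\gap_j)\|E_{j+1}\|_\F)$. (A secondary slip: $\|E_i^\T\fl{v}_{i+1}\|_2\le\|E_i\|_2$, not $\|E_i\|_2\|E_{j+1}\|_\F$, so your displayed intermediate bound is also incorrectly stated.)

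The missing idea is the aggregation step \eqref{eq:key}. Starting from \eqref{eq:omgcos}, i.e.\ $\omega_{j+1}^2=2\sum_{i=1}^j\|\fl{V}_i^\T\fl{v}_{i+1}\|_2^2$, and the bound $\|\fl{V}_i^\T\fl{v}_{i+1}\|_2\le (c_j/\gap_j)\|\fl{V}_i^\T\eta_{i+1}-E_i^\T\fl{v}_{i+1}\|_2$ from Lemma~\ref{lemma:cosine}, the paper does \emph{not} estimate the summands individually. Instead it observes that the vectors $\fl{V}_i^\T\eta_{i+1}-E_i^\T\fl{v}_{i+1}$, $i=1,\dots,j$, are precisely the strictly triangular columns of the skew-symmetric matrix $\fl{V}_{j+1}^\T E_{j+1}-E_{j+1}^\T\fl{V}_{j+1}$, so that
\begin{align*}
\sum_{i=1}^j\|\fl{V}_i^\T\eta_{i+1}-E_i^\T\fl{v}_{i+1}\|_2^2
=\tfrac12\|\fl{V}_{j+1}^\T E_{j+1}-E_{j+1}^\T\fl{V}_{j+1}\|_\F^2
\le 2\,\|\fl{V}_{j+1}\|_2^2\,\|E_{j+1}\|_\F^2 ,
\end{align*}
which keeps a single factor $\|E_{j+1}\|_\F^2$ with no $j$-dependent loss. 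Combining this with $\|\fl{V}_{j+1}\|_2^2\le 1+\omega_{j+1}$ yields the implicit inequality \eqref{eq:delbd3}; note the implicitness is in $\omega_{j+1}$ itself (through $\|\fl{V}_{j+1}\|_2$), not in $\omega_j$ as you suggest --- the dependence on $\omega_j$ enters only through the constant $c_j$, which is known from the previous step. The a~priori bound \eqref{eq:omgbnd} then follows by solving the scalar quadratic inequality \eqref{eq:quadratic}, using $\sqrt{4+\chi_{j+1}^2}\le 2+\chi_{j+1}$. Your closing instinct that one must convert an implicit relation into a genuine a~priori bound is correct, but without the skew-symmetric aggregation the quantity being solved for has the wrong magnitude to begin with.
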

%
%
\begin{proof} 
By the definition \eqref{eq:omgdef}, we have 
\begin{align} \label{eq:omgcos}
\omega_{j+1}^2 
= 2\cdot\left\| \Phi^\T_{j+1} \right\|^2_\F
= 2\cdot\sum_{i=1}^{j}\|\fl{V}_i^\T\fl{v}_{i+1}\|_2^2.
\end{align}
Recall Lemma \ref{lemma:cosine} that, for any $1 \leq i \leq j$,
\begin{align*}
\|\fl{V}_i^\T\fl{v}_{i+1}\|_2
\leq \frac{c_j}{\gap_j}\cdot\|\fl{V}_{i}^\T\eta_{i+1} - E_i^\T\fl{v}_{i+1}\|_2.
\end{align*}
Hence we can derive from \eqref{eq:omgcos} that
\begin{align} 
\omega_{j+1}^2 
&\leq \frac{2c^2_j}{\gap_j^2}\cdot\sum_{i=1}^{j}\|\fl{V}_i^\T\eta_{i+1} -
E_i^\T\fl{v}_{i+1}\|_2^2 \notag \\
&= \frac{2c^2_j}{\gap_j^2}\cdot \frac{1}{2}\|\fl{V}_{j+1}^\T E_{j+1} - E_{j+1}^\T\fl{V}_{j+1}\|_{\rm F}^2 \nonumber \\
& \leq \frac{2c^2_j}{\gap_j^2}\cdot 2\|\fl{V}_{j+1}^\T E_{j+1}\|^2_{\F} 
\ \leq\ \frac{4c^2_j}{\gap_j^2}\cdot \|\fl{V}_{j+1}^\T\|^2_2 \|E_{j+1}\|^2_{\F}.   \label{eq:key}
\end{align}
Since 
\begin{align*}
\|\fl{V}_{j+1}^\T\|^2_2 
= \|\fl{V}_{j+1}^\T\fl{V}_{j+1}\|_2
\leq \|I_{j+1}\|_2 + \|\fl{V}_{j+1}^\T\fl{V}_{j+1} - I_{j+1}\|_2 
\leq 1 + \omega_{j+1},
\end{align*}
we arrive at 
\begin{align} \label{eq:delbd3}
\omega_{j+1}^2 \leq
\frac{4c_j^2}{\gap_j^2}\cdot(1+\omega_{j+1})\cdot\|E_{j+1}\|_{\rm F}^2.
\end{align}
Letting $t=\omega_{j+1}/{\chi}_{j+1}$, where
${\chi}_{j+1} = 2c_j\|E_{j+1}\|_{\rm F}/\gap_j$,
then the inequality \eqref{eq:delbd3} is recast as
\begin{align}  \label{eq:quadratic}
t^2 - {\chi}_{j+1} t - 1\leq 0.
\end{align}
By the fact that the quadratic polynomial in \eqref{eq:quadratic} 
is concave, we conclude that
\begin{align*}
t \leq \frac{1}{2}\cdot\left({\chi}_{j+1} + \sqrt{4 + {\chi}_{j+1}^2}\right)
\leq \frac{1}{2}\cdot\left({\chi}_{j+1} + 2 + {\chi}_{j+1} \right)
\leq 1 + {\chi}_{j+1}.
\end{align*}
This proves the upper bound in \eqref{eq:omgbnd}.
\end{proof}

\begin{remark}{\rm 
A straightforward way to derive an upper bound of $\omega_{j+1}$ 
is to use the relation 
\begin{equation*}
\omega_{j+1}^2 = \omega_j^2 + 2\cdot\|\fl{V}_j^\T\fl{v}_{j+1}\|_2^2,
\end{equation*}
and the following bound from Lemma~\ref{lemma:cosine}
\begin{align*}
\|\fl{V}_j^\T\fl{v}_{j+1}\|_2^2 
 \leq \frac{c_j^2}{\gap_j^2} \cdot 
       \left( \|\fl{V}_j\|_2\|\eta_{j+1}\|_2 + \|E_j\|_\F \right)^2
\approx \frac{c_j^2}{\gap_j^2} (\|E_{j+1}\|_\F^2 + 2\|\eta_{j+1}\|_2\|E_j\|_\F).
\end{align*}
However, this would lead to a pessimistic upper bound of $\omega_{j+1}$. 
In contrast, in Theorem~\ref{thm:omg}, 
we exploit the structure of the orthogonality $\fl{V}_i^\T\fl{v}_{i+1}$ 
to obtain a tighter bound \eqref{eq:omgbnd} of $\omega_{j+1}$. 
Eq.~\eqref{eq:key} in the proof is the key step. 
In Example~\ref{eg:eg1} in Section~\ref{sec:examples},
we will demonstrate numerically that the bound \eqref{eq:omgbnd} 
is indeed a tight upper bound on the loss of orthogonality $\omega_{j+1}$.
}\end{remark}

%
%
\subsection{Symmetric backward error norm}
\label{subsec:backErr} 

In this section, we derive 
a computable upper bound on the symmetric backward error norm $\delta_{j+1}$
of computed eigenpairs $(\fl{\Lam}_{j+1}, \fl{V}_{j+1})$ of $A$ 
defined in \eqref{eq:deltadef}.  
First, the following lemma gives an upper bound of the 
norm of the residual for $(\fl{\Lam}_{j+1}, \fl{V}_{j+1})$: 
\begin{align} \label{eq:resdef} 
R_{j+1} \equiv A\fl{V}_{j+1} - \fl{V}_{j+1}\fl{\Lam}_{j+1}.
\end{align}

%
%
\begin{lemma}  \label{lemma:Res}
By the governing equations
\eqref{eq:gov2b} and \eqref{eq:gov2a} of $j$ steps of EED,
if $\tau_j \omega_j < \sqrt{2}$,
then for the computed eigenpairs $(\fl{\Lam}_{j+1}, \fl{V}_{j+1})$ of $A$, 
the Frobenius norm of the residual $R_{j+1}$ defined 
in \eqref{eq:resdef} satisfies 
\begin{align}  \label{eq:resbnd}
\|R_{j+1}\|_\F \leq
\left( 1 + \sqrt{2} c_j\tau_j (1 + \omega_{j+1})\right)
 \|E_{j+1}\|_\F,
\end{align}
where $c_j = (1 - \tau_j\omega_j/\sqrt{2})^{-1}$, 
and $\gap_j$ and $\tau_j$ are the spectral gap and
the shift-gap ratio defined in \eqref{eq:gapdef} and \eqref{eq:taudef},
respectively.
\end{lemma}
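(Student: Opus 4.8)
The plan is to obtain an \emph{explicit} formula for the residual $R_{j+1}$ by eliminating $A$ in favor of $\fl{A}_j$ via $A = \fl{A}_j - \fl{V}_j\Sigma_j\fl{V}_j^\T$ (from \eqref{eq:defAj}) and then invoking the governing equations; once the residual is in closed form, the bound \eqref{eq:resbnd} should follow from the triangle inequality together with the orthogonality estimate already established in Lemma~\ref{lemma:cosine} and the proof of Theorem~\ref{thm:omg}.

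First I would write $\fl{V}_{j+1} = [\fl{V}_j,\ \fl{v}_{j+1}]$ and compute $A\fl{V}_{j+1} = \fl{A}_j\fl{V}_{j+1} - \fl{V}_j\Sigma_j\fl{V}_j^\T\fl{V}_{j+1}$. Applying \eqref{eq:gov2a} to the first $j$ columns and \eqref{eq:gov2b} to the last column, and using $\fl{V}_j^\T\fl{V}_j = I_j + \Phi_j + \Phi_j^\T$ to expand $\fl{V}_j\Sigma_j\fl{V}_j^\T\fl{V}_j$, I expect the shift contributions $\fl{V}_j\Sigma_j$ and $\fl{V}_j\Sigma_j\Phi_j$ to cancel and the diagonal part $\fl{V}_{j+1}\fl{\Lam}_{j+1}$ to be absorbed, leaving the clean identity
\begin{align*}
R_{j+1} = E_{j+1} - \fl{V}_j\Sigma_j\,[\,\Phi_j^\T,\ \fl{V}_j^\T\fl{v}_{j+1}\,].
\end{align*}

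The key observation is then that the block $[\,\Phi_j^\T,\ \fl{V}_j^\T\fl{v}_{j+1}\,]$ is exactly the first $j$ rows of $\Phi_{j+1}^\T$, so its Frobenius norm equals $\|\Phi_{j+1}^\T\|_\F = \omega_{j+1}/\sqrt 2$ (equivalently $\sum_{i=1}^{j}\|\fl{V}_i^\T\fl{v}_{i+1}\|_2^2 = \omega_{j+1}^2/2$). By submultiplicativity the correction term is bounded by $\|\fl{V}_j\|_2\,\|\Sigma_j\|_2\,\omega_{j+1}/\sqrt 2$, and I would insert $\|\fl{V}_j\|_2^2 \le 1+\omega_{j+1}$ and $\|\Sigma_j\|_2 = \tau_j\gap_j$ (the latter from \eqref{eq:taudef}). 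To convert the leftover $\omega_{j+1}$ into $\|E_{j+1}\|_\F$, I would recycle the estimate already proved for Theorem~\ref{thm:omg}, namely inequality \eqref{eq:delbd3}, which gives $\omega_{j+1} \le \frac{2c_j}{\gap_j}\sqrt{1+\omega_{j+1}}\,\|E_{j+1}\|_\F$. Substituting yields $\|\fl{V}_j\Sigma_j[\Phi_j^\T,\fl{V}_j^\T\fl{v}_{j+1}]\|_\F \le \sqrt 2\,c_j\tau_j(1+\omega_{j+1})\|E_{j+1}\|_\F$, and adding the $\|E_{j+1}\|_\F$ from the first term produces the factor $1 + \sqrt 2\, c_j\tau_j(1+\omega_{j+1})$ in \eqref{eq:resbnd}.

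The main obstacle is the first step: the algebra must be carried out carefully so that the shift terms $\fl{V}_j\Sigma_j$ and $\fl{V}_j\Sigma_j\Phi_j$ cancel \emph{exactly}, leaving only the skew contribution $-\fl{V}_j\Sigma_j\Phi_j^\T$. The non-obvious point, which makes the rest routine, is recognizing that the residual's correction assembles into (the top rows of) $\Phi_{j+1}^\T$; this is precisely what lets me reuse the global orthogonality bound instead of estimating each $\|\fl{V}_i^\T\fl{v}_{i+1}\|_2$ individually and summing, and it explains why the same spectral-gap and shift-gap quantities $\gap_j,\tau_j$ govern both $\omega_{j+1}$ and $\|R_{j+1}\|_\F$.
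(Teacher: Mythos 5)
Your proposal is correct and is essentially the paper's own proof: your block identity $R_{j+1} = E_{j+1} - \fl{V}_j\Sigma_j\,[\Phi_j^\T,\ \fl{V}_j^\T\fl{v}_{j+1}]$ coincides with the paper's \eqref{eq:Res}, $R_{j+1} = E_{j+1} - \fl{V}_{j+1}\Sigma_{j+1}\Phi_{j+1}^\T$ (the last row of $\Phi_{j+1}^\T$ being zero), and your subsequent estimates --- the triangle inequality, $\|\Phi_{j+1}^\T\|_\F = \omega_{j+1}/\sqrt{2}$, $\|\fl{V}_{j+1}\|_2 \le \sqrt{1+\omega_{j+1}}$, $\|\Sigma_j\|_2 = \tau_j\gap_j$, and recycling \eqref{eq:delbd3} --- match the paper's step for step. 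The only cosmetic difference is bookkeeping: the paper derives the identity by computing $\fl{A}_{j+1}\fl{V}_{j+1}$ in two ways, via \eqref{eq:gov2a} at step $j+1$ and via the definition \eqref{eq:defAj}, whereas you substitute $A = \fl{A}_j - \fl{V}_j\Sigma_j\fl{V}_j^\T$ into the $j$-step governing equations, which has the mild virtue of invoking only the hypotheses as stated.
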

\begin{proof} ~ 
From the governing equation \eqref{eq:gov2a} of the EED procedure 
after $j+1$ steps, we have
\begin{align}  
\fl{A}_{j+1}\fl{V}_{j+1} = \fl{V}_{j+1}(\fl{\Lam}_{j+1} + \Sigma_{j+1}) 
	+ \fl{V}_{j+1}\Sigma_{j+1}\Phi_{j+1} + E_{j+1}. \label{eq:res_gov}
\end{align}
On the other hand, by the definition \eqref{eq:defAj} of
$\fl{A}_{j+1}$, we have
\begin{align}
\fl{A}_{j+1}\fl{V}_{j+1} 
& = A\fl{V}_{j+1} + \fl{V}_{j+1}\Sigma_{j+1}\fl{V}_{j+1}^\T\fl{V}_{j+1} 
     \nonumber \\
& = A\fl{V}_{j+1} + \fl{V}_{j+1}\Sigma_{j+1}
      (\Phi_{j+1} + I_{j+1} + \Phi_{j+1}^\T).  \label{eq:res_expand}
\end{align}
Combining \eqref{eq:res_gov} and \eqref{eq:res_expand}, 
we obtain the residual
\begin{align}
R_{j+1} = A\fl{V}_{j+1} - \fl{V}_{j+1}\fl{\Lam}_{j+1} 
= E_{j+1} - \fl{V}_{j+1}\Sigma_{j+1}\Phi_{j+1}^\T  \label{eq:Res}.
\end{align}
Consequently, the norm of the residual $R_{j+1}$ is bounded by
\begin{align} 
\|R_{j+1}\|_{\rm F} \leq \|E_{j+1}\|_{\rm F} + 
  \|\fl{V}_{j+1}\|_2\|\Sigma_{j+1}\Phi_{j+1}^\T\|_{\rm F}.  \label{eq:RFro1}
\end{align}
Note that $\Sigma_{j+1} = \diag(\sigma_1,\ldots,\sigma_{j+1})$
and $\Phi_{j+1}^\T$ is the strictly upper triangular part of 
the matrix $\fl{V}_{j+1}^\T\fl{V}_{j+1} - I_{j+1}$, and we have
\begin{align} 
\|\fl{V}_{j+1}\|_2\|\Sigma_{j+1}\Phi_{j+1}^\T\|_\F
& \leq \|\fl{V}_{j+1}\|_2\|\Sigma_j\|_2\|\Phi_{j+1}^\T\|_\F \nonumber \\
& \leq \|\fl{V}_{j+1}\|_2\|\Sigma_j\|_2\cdot\frac{1}{\sqrt{2}}\omega_{j+1} \nonumber \\
& \leq \frac{1}{\sqrt{2}}\|\Sigma_j\|_2\sqrt{(1+\omega_{j+1})\omega_{j+1}^2},  
\label{eq:SigTheFro2}
\end{align} 
where, for the third inequality, we again use the fact
that $\|\fl{V}_{j+1}\|_2 \leq \sqrt{1+\omega_{j+1}}$  by the definition of 
the loss of orthogonality $\omega_{j+1}$. 
Left-multiplying \eqref{eq:delbd3} by $1+\omega_{j+1}$,
we know that
\begin{align*}  
(1 + \omega_{j+1})\omega_{j+1}^2 
\leq \frac{4c_j^2}{\gap_j^2}\cdot (1+\omega_{j+1})^2\cdot \|E_{j+1}\|_\F^2.
\end{align*}
Plugging into \eqref{eq:SigTheFro2}, we obtain  
\begin{align*}  \label{eq:V2SigTheFro}
\|\fl{V}_{j+1}\|_2\|\Sigma_{j+1}\Phi_{j+1}^\T\|_\F
& \leq \sqrt{2}\|\Sigma_j\|_2c_j\gap_j^{-1}\cdot(1+\omega_{j+1})\cdot\|E_{j+1}\|_\F \nonumber \\
& = \sqrt{2}c_j\tau_j\cdot(1+\omega_{j+1})\cdot\|E_{j+1}\|_\F.
\end{align*}
%
%
Combine with \eqref{eq:RFro1} and we arrive at the upper bound \eqref{eq:resbnd}
of $\|R_{j+1}\|_{\rm F}$.
\end{proof}

Now we recall the following theorem by Sun \cite{Sun1995}.
\begin{theorem}[Thm.3.1 in \cite{Sun1995}] \label{thm:Sun1995}
Let $(\Lambda,X)$ be approximate eigenpairs of a symmetric matrix $A$.
Let $Q$ be the orthonormal basis from the polar decomposition of $X$.
Define the set 
\begin{align}
\mc{H}_{Q} = 
\{\Delta \mid (A+\Delta)Q = Q\Lambda,\ \Delta=\Delta^\T \in\R^{n\times n} \}.
\end{align}
Then $\mc{H}_{Q}$ is non-empty and 
there exists a unique $\Delta_{Q}\in\mc{H}_{Q}$ such that
\begin{align}  \label{eq:Sun1995}
\min_{\Delta\in\mc{H}_{Q}} \|\Delta\|_{\rm F} = 
\|\Delta_{Q}\|_{\rm F}  \leq
\sqrt{\|R\|^2_{\rm F} + \|\mc{P}_{X}^{\perp}R\|^2_{\rm
F}}\Big/{\sigma_{\min}(X)} ,
\end{align}
where $R = AX - X\Lambda$ is the residual
and $\mc{P}_{X}^{\perp}$ is the orthogonal projection
onto the orthogonal complement of $\mc{R}(X)$. 
\end{theorem}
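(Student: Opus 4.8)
The plan is to characterize $\mc{H}_Q$ as the solution set of a linear matrix equation, find its minimum-Frobenius-norm element in closed form, and then convert the residual with respect to $Q$ into the residual $R$ with respect to $X$. Writing $R_Q \equiv AQ - Q\Lambda$, a symmetric $\Delta$ lies in $\mc{H}_Q$ if and only if $\Delta Q = -R_Q$. Since $Q$ has orthonormal columns, $Q^\T R_Q = Q^\T A Q - \Lambda$ is symmetric (as $A = A^\T$ and $\Lambda$ is diagonal), which is exactly the compatibility condition guaranteeing a symmetric solution; hence $\mc{H}_Q \neq \emptyset$. To isolate the minimizer I would complete $Q$ to an orthogonal matrix $[Q\ Q_\perp]$ and write $\Delta$ in this basis as a symmetric $2\times 2$ block matrix. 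The constraint fixes the $(1,1)$ block to $-Q^\T R_Q$ and the off-diagonal blocks to $-Q_\perp^\T R_Q$ and its transpose, leaving the $(2,2)$ block entirely free; since $\|\Delta\|_\F^2$ equals the sum of squared block norms, the unique minimizer sets the free block to zero. This yields uniqueness of $\Delta_Q$ together with the exact value
\[
\|\Delta_Q\|_\F^2 = \|Q^\T R_Q\|_\F^2 + 2\,\|\mc{P}_Q^\perp R_Q\|_\F^2 ,
\qquad \mc{P}_Q^\perp = I - QQ^\T .
\]

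Next I would pass from $Q$ to $X$ using the polar decomposition $X = QP$ with $P$ symmetric positive definite. Then $\mc{R}(X) = \mc{R}(Q)$, so $\mc{P}_X^\perp = \mc{P}_Q^\perp$, and $\sigma_{\min}(X) = \sigma_{\min}(P)$. Projecting $R = AX - X\Lambda$ onto $\mc{R}(Q)^\perp$ gives $\mc{P}_Q^\perp R = \mc{P}_Q^\perp AX = (\mc{P}_Q^\perp AQ)P = (\mc{P}_Q^\perp R_Q)P$, so $\mc{P}_Q^\perp R_Q = (\mc{P}_X^\perp R)P^{-1}$ and $\|\mc{P}_Q^\perp R_Q\|_\F \le \|\mc{P}_X^\perp R\|_\F/\sigma_{\min}(X)$. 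It then remains to bound the tangential block $Q^\T R_Q = S - \Lambda$, where $S = Q^\T A Q$, against $Q^\T R = SP - P\Lambda$.

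This tangential estimate is the crux. Setting $D = S - \Lambda$ (symmetric), one has $SP - P\Lambda = DP + (\Lambda P - P\Lambda)$, and the commutator $K = \Lambda P - P\Lambda$ is skew-symmetric. Splitting into symmetric and skew parts, which are orthogonal in the Frobenius inner product, removes $K$ from the symmetric part:
\[
\|SP - P\Lambda\|_\F^2 = \big\|\tfrac{1}{2}(DP + PD)\big\|_\F^2 + \big\|\tfrac{1}{2}(DP - PD) + K\big\|_\F^2 \ge \tfrac{1}{4}\,\|DP + PD\|_\F^2 .
\]
In the eigenbasis of $P$ the Lyapunov operator $D \mapsto DP + PD$ acts entrywise as $D_{ij}\mapsto (p_i + p_j)D_{ij}$ with $p_i + p_j \ge 2\sigma_{\min}(P)$, so $\|DP + PD\|_\F \ge 2\sigma_{\min}(P)\,\|D\|_\F$. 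Hence $\|Q^\T R_Q\|_\F = \|D\|_\F \le \|SP - P\Lambda\|_\F/\sigma_{\min}(X) = \|Q^\T R\|_\F/\sigma_{\min}(X)$.

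Substituting the complement and tangential bounds into the exact expression for $\|\Delta_Q\|_\F^2$ and using $\|R\|_\F^2 + \|\mc{P}_X^\perp R\|_\F^2 = \|Q^\T R\|_\F^2 + 2\|\mc{P}_X^\perp R\|_\F^2$ reproduces \eqref{eq:Sun1995}. The main obstacle is precisely the tangential inequality: a naive term-by-term trace bound fails because the cross term $\Tr[\Lambda P S P]$ is sign-indefinite, so it is the symmetric/skew splitting, combined with the Lyapunov lower bound, that renders the commutator $K$ harmless and keeps the only inequalities in the argument the two conversion steps governed by $\sigma_{\min}(X)$.
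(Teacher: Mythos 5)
Your proof is correct, but there is nothing in the paper to compare it against: Theorem~\ref{thm:Sun1995} is imported verbatim from Sun \cite{Sun1995} and used as a black box in the proof of Theorem~\ref{thm:backErr}, so the paper contains no proof of this statement. Taken on its own terms, your argument is a valid, self-contained derivation, and its two halves are exactly the two things that need proving: (i) the block characterization of $\mc{H}_Q$ in an orthogonal basis $[Q,\ Q_\perp]$, which simultaneously gives non-emptiness (your separate ``compatibility condition'' remark is redundant, since the block construction already exhibits a symmetric solution), uniqueness of the minimizer, and the exact value $\|\Delta_Q\|_\F^2 = \|Q^\T R_Q\|_\F^2 + 2\|\mc{P}_Q^\perp R_Q\|_\F^2$ --- the classical optimal symmetric backward error for an orthonormal basis (Kahan--Parlett--Jiang/Sun); and (ii) the conversion from the residual of $Q$ to the residual of $X = QP$. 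Your treatment of (ii) is the substantive contribution: the normal component converts exactly via $\mc{P}_Q^\perp R_Q = (\mc{P}_X^\perp R)P^{-1}$, while the tangential component $Q^\T R = DP + K$ with $D = Q^\T A Q - \Lambda$ symmetric picks up the skew commutator $K = \Lambda P - P\Lambda$, which you correctly eliminate by the symmetric/skew Frobenius-orthogonal splitting and then bound the Lyapunov map $D \mapsto \tfrac{1}{2}(DP+PD)$ below by $\sigma_{\min}(P)\,\|D\|_\F$ in the eigenbasis of $P$; combined with the Pythagorean identity $\|R\|_\F^2 = \|Q^\T R\|_\F^2 + \|\mc{P}_X^\perp R\|_\F^2$ this reproduces \eqref{eq:Sun1995} exactly. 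One small caveat: you implicitly assume $X$ has full column rank so that $P$ is invertible, but this is the same assumption the theorem itself makes implicitly through the factor $1/\sigma_{\min}(X)$, so it is not a gap in substance.
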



%
%
From Lemma \ref{lemma:Res} and Theorem \ref{thm:Sun1995},
we have the following computable upper bound on the symmetric backward 
error norm $\delta_{j+1}$ of the computed eigenpairs 
$(\fl{\Lam}_{j+1}, \fl{V}_{j+1})$ of $A$. 

\begin{theorem}  \label{thm:backErr}
By the governing equations
\eqref{eq:gov2b} and \eqref{eq:gov2a} of $j$ steps of EED,
if $\tau_j \omega_j < \sqrt{2}$ and $\omega_{j+1} < 1$,
then the symmetric backward error norm $\delta_{j+1}$ 
of the computed eigenpairs $(\fl{\Lam}_{j+1}, \fl{V}_{j+1})$ of $A$
defined in \eqref{eq:deltadef} has the following upper bound
\begin{align}  \label{eq:backErrBnd}
\delta_{j+1} \leq 
\sqrt{2}\left( \frac{1+ c_j\tau_j (1 + \omega_{j+1}) }
                    {\sqrt{1-\omega_{j+1}}} \right) \|E_{j+1}\|_{\rm F}, 
\end{align}
where $c_j = (1 - \tau_j\omega_j/\sqrt{2})^{-1}$,
and $\gap_j$ and $\tau_j$ are the spectral gap and
the shift-gap ratio defined in \eqref{eq:gapdef} and \eqref{eq:taudef},
respectively.
\end{theorem}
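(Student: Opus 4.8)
The plan is to combine the residual bound of Lemma~\ref{lemma:Res} with Sun's backward-error theorem (Theorem~\ref{thm:Sun1995}), applied to the approximate eigenpairs $(\fl{\Lam}_{j+1}, \fl{V}_{j+1})$ with $Q_{j+1}$ the orthonormal polar factor of $\fl{V}_{j+1}$. Theorem~\ref{thm:Sun1995} gives
\begin{align*}
\delta_{j+1} \leq
\frac{\sqrt{\|R_{j+1}\|_\F^2 + \|\mc{P}_{\fl{V}_{j+1}}^\perp R_{j+1}\|_\F^2}}
     {\sigma_{\min}(\fl{V}_{j+1})},
\end{align*}
so the task reduces to bounding the numerator from above and the denominator from below. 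For the denominator, I would use $\sigma_{\min}(\fl{V}_{j+1})^2 = \lambda_{\min}(\fl{V}_{j+1}^\T\fl{V}_{j+1})$ together with Weyl's inequality and the definition \eqref{eq:omgdef} of $\omega_{j+1}$ to get $\lambda_{\min}(\fl{V}_{j+1}^\T\fl{V}_{j+1}) \geq 1 - \|\fl{V}_{j+1}^\T\fl{V}_{j+1} - I_{j+1}\|_2 \geq 1 - \omega_{j+1}$, hence $\sigma_{\min}(\fl{V}_{j+1}) \geq \sqrt{1-\omega_{j+1}}$, which is positive under the hypothesis $\omega_{j+1} < 1$.

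The key step, and the main obstacle, is to sharpen the projected-residual term $\|\mc{P}_{\fl{V}_{j+1}}^\perp R_{j+1}\|_\F$. From the proof of Lemma~\ref{lemma:Res}, the residual has the explicit form $R_{j+1} = E_{j+1} - \fl{V}_{j+1}\Sigma_{j+1}\Phi_{j+1}^\T$ in \eqref{eq:Res}. Since the columns of $\fl{V}_{j+1}\Sigma_{j+1}\Phi_{j+1}^\T$ lie in $\mc{R}(\fl{V}_{j+1})$, the orthogonal projection onto the complement annihilates that term, so $\mc{P}_{\fl{V}_{j+1}}^\perp R_{j+1} = \mc{P}_{\fl{V}_{j+1}}^\perp E_{j+1}$ and therefore $\|\mc{P}_{\fl{V}_{j+1}}^\perp R_{j+1}\|_\F \leq \|E_{j+1}\|_\F$. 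This structural cancellation is exactly what is needed to land on the stated constant: the naive bound $\|\mc{P}^\perp R_{j+1}\|_\F \leq \|R_{j+1}\|_\F$ would only yield $\sqrt{2}\,\|R_{j+1}\|_\F$ in the numerator, leaving a spurious extra factor $\sqrt{2}$ on the $c_j\tau_j$ term and failing to reproduce \eqref{eq:backErrBnd}.

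Finally, I would assemble the pieces. Writing $a = \|E_{j+1}\|_\F$ and $\rho = c_j\tau_j(1+\omega_{j+1})$, Lemma~\ref{lemma:Res} gives $\|R_{j+1}\|_\F \leq (1 + \sqrt{2}\rho)\,a$, while the previous step gives $\|\mc{P}_{\fl{V}_{j+1}}^\perp R_{j+1}\|_\F \leq a$. Combining,
\begin{align*}
\|R_{j+1}\|_\F^2 + \|\mc{P}_{\fl{V}_{j+1}}^\perp R_{j+1}\|_\F^2
\leq (1+\sqrt{2}\rho)^2 a^2 + a^2
\leq 2(1+\rho)^2 a^2,
\end{align*}
where the last inequality, after expanding, reduces to the elementary fact $\sqrt{2} \leq 2$ (so that $2\sqrt{2}\rho \leq 4\rho$ for $\rho \geq 0$). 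Dividing the square root of this by $\sigma_{\min}(\fl{V}_{j+1}) \geq \sqrt{1-\omega_{j+1}}$ produces exactly the bound \eqref{eq:backErrBnd}. I expect the only nontrivial ingredient to be the projection cancellation in the second paragraph; everything else is a direct invocation of Sun's theorem and elementary norm estimates.
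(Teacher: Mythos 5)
Your proposal is correct and follows essentially the same route as the paper's own proof: Sun's theorem with the polar factor $Q_{j+1}$, the lower bound $\sigma_{\min}(\fl{V}_{j+1}) \geq \sqrt{1-\omega_{j+1}}$, the cancellation $\mc{P}^{\perp}R_{j+1} = \mc{P}^{\perp}E_{j+1}$ from \eqref{eq:Res}, and the elementary inequality $1+(1+\sqrt{2}\rho)^2 \leq 2(1+\rho)^2$ to arrive at \eqref{eq:backErrBnd}. Your identification of the projection cancellation as the step that removes the spurious $\sqrt{2}$ on the $c_j\tau_j$ term matches the structure of the paper's argument exactly.
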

\begin{proof} 
For the computed eigenvectors $\fl{V}_{j+1}$ of $A$, 
let $Q_{j+1}$ be the orthonormal basis from the 
polar decomposition of $\fl{V}_{j+1}$, and
the set $\mc{H}_{Q_{j+1}}$ be defined in \eqref{eq:Hdef}.
It follows from the definition~\eqref{eq:deltadef} and Theorem \ref{thm:Sun1995} that 
\begin{align} 
\delta_{j+1} & = \min_{\Delta\in\mc{H}_{Q_{j+1}}} \|\Delta\|_{\rm F}
 \leq \frac{1}{\sigma_{\min}(\fl{V}_{j+1})}
\sqrt{\|R_{j+1}\|^2_{\rm F} + \|\mc{P}^{\perp}R_{j+1}\|^2_{\rm F}}, 
\label{eq:Sun1995a}
\end{align}
where 
$R_{j+1}$ is the residual 
of $(\fl{\Lam}_{j+1}, \fl{V}_{j+1})$ 
defined in \eqref{eq:resdef}, 
and $\mc{P}^{\perp}$ is the orthogonal projection 
onto the orthogonal complement of the subspace $\mc{R}(\fl{V}_{j+1})$, i.e.,
\[ 
\mc{P}^{\perp} 
= I - \fl{V}_{j+1}(\fl{V}^T_{j+1} \fl{V}_{j+1})^{-1} \fl{V}^T_{j+1}.
\] 
By the equation~\eqref{eq:Res}, 
$\mc{P}^{\perp}R_{j+1} = \mc{P}^{\perp}E_{j+1}$. Hence we have 
\begin{align} \label{eq:PResbnd}
\|\mc{P}^{\perp}R_{j+1}\|_{\rm F} 
= \|\mc{P}^{\perp}E_{j+1}\|_{\rm F} \leq \|E_{j+1}\|_{\rm F}.
\end{align}
On the other hand, 
by the definition \eqref{eq:omgdef} of $\omega_{j+1}$ and 
the assumption $\omega_{j+1} < 1$, we have 
\begin{align*}  
|\sigma^2_{\min}(\fl{V}_{j+1}) - 1| 
\leq \|\fl{V}_{j+1}^\T\fl{V}_{j+1} - I_{j+1}\|_2 
\leq \omega_{j+1} < 1,
\end{align*}
which implies  the following lower bound of 
the singular value 
\begin{align}  \label{eq:sigvlower1}
\sigma_{\min}(\fl{V}_{j+1}) \geq \sqrt{1 - \omega_{j+1}}.
\end{align}
Plug \eqref{eq:PResbnd} and \eqref{eq:sigvlower1}
into \eqref{eq:Sun1995a}
and recall the upper bound of $\|R_{j+1}\|_\F$ in Lemma~\ref{lemma:Res}, 
and then we obtain
\begin{align*} 
\delta_{j+1} 
& \leq\sqrt{1+\left(1+\sqrt{2}c_j\tau_j\cdot(1+\omega_{j+1})\right)^2}
   \cdot\frac{\|E_{j+1}\|_{\rm F}}{\sqrt{1 - \omega_{j+1}}}  \nonumber \\
& \leq\sqrt{2}\left( 
     \frac{1 + c_j\tau_j (1+ \omega_{j+1})}{\sqrt{1 - \omega_{j+1}}}
     \right)\|E_{j+1}\|_{\rm F}, 
\end{align*}
where the second inequality is due to
$1+(1+\sqrt{2}a)^2 \leq 2(1+2a+a^2) = 2(1+a)^2$.
This completes the proof. 
\end{proof}

\begin{remark} \label{rmk:delres} 
{\rm
We note from the inequalities \eqref{eq:Sun1995a} 
and \eqref{eq:sigvlower1} that, when $\omega_{j+1} \ll 1$,
the residual norm $\|R_{j+1}\|_\F$ can be used 
as an easy-to-compute estimate of $\delta_{j+1}$. 
We will use this observation in numerical examples 
in Section \ref{sec:examples}.
}\end{remark}

%
%
\subsection{Backward stability of EED}  \label{sec:conditions}
In Theorems \ref{thm:omg} and \ref{thm:backErr}, the upper bounds
\eqref{eq:omgbnd} and \eqref{eq:backErrBnd} 
for $\delta_{j+1}$ and $\omega_{j+1}$ 
involve the quantity $\omega_{j}$ 
from the previous EED step. 
In this section, under a mild assumption, we derive explicit upper bounds for
$\omega_{j+1}$ and $\delta_{j+1}$, and then reveal 
conditions for the backward stability of the EED procedure.

\begin{lemma} \label{lem:wd}
Consider $j$ steps of EED  governed by Eqs.~\eqref{eq:gov2b}
and~\eqref{eq:gov2a}.  Assume 
\begin{align}  \label{eq:assumption0}
\tau_{j} \frac{\|A\|_2}{\gap_{j}}\cdot 4\sqrt{j+1}\cdot tol < 0.1.
\end{align}
Then 
\begin{enumerate}[(i)]
\item\label{lem:wd:a}
it holds that 
\begin{equation}\label{eq:tauiomega}
\tau_i\omega_i < 0.11
\quad\text{and}\quad
c_i = (1-\tau_i\omega_i/\sqrt{2})^{-1} < 2
\quad \text{for $i = 1,2,\dots, j$;}
\end{equation}

\item\label{lem:wd:b}
the loss of orthogonality $\omega_{j+1}$ is bounded by 
\begin{align}  \label{eq:omgbnd0} 
    \omega_{j+1} \le \left(\frac{\|A\|_2}{\gamma_j}\cdot 5
    \sqrt{j+1}\right)\cdot tol;
\end{align}

\item\label{lem:wd:c}
the backward error norm $\delta_{j+1}$ is bounded by 
\begin{equation}\label{eq:deltabnd0} 
    \delta_{j+1} \leq  
    \left(\tau_j \cdot 5 \sqrt{j+1}\right) \cdot tol\cdot \|A\|_2.
\end{equation}
\end{enumerate}
\end{lemma}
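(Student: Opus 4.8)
The plan is to prove all three parts simultaneously by induction on the step index, resting on two structural facts: a lower bound $\tau_i\ge 1$ and the monotonicity of $\gap_i$ and $\tau_i$ in $i$. First I would record two preliminaries. From \eqref{eq:toldef}--\eqref{eq:jtol}, the residual matrix $E_{i+1}=[\eta_1,\dots,\eta_{i+1}]$ satisfies $\|E_{i+1}\|_\F\le\sqrt{i+1}\,tol\,\|A\|_2$. The crucial observation is that $\tau_i\ge 1$: taking $\lam=\fl{\lam}_k$ and $\theta=\fl{\lam}_k+\sigma_k$ in the definition \eqref{eq:gapdef} gives $\gap_i\le|\sigma_k|$ for every $k\le i$, hence $\gap_i\le\max_k|\sigma_k|$ and $\tau_i\ge 1$. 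Combined with the monotonicity $\gap_i\ge\gap_j$ and $\tau_i\le\tau_j$ for $i\le j$ (both $\mc{I}_{i+1}$ and $\mc{J}_i$ grow with $i$, so the gap is non-increasing while the maximal shift is non-decreasing), this yields the master estimate, valid for every $i\le j$,
\[
\frac{\|A\|_2}{\gap_i}\sqrt{i+1}\,tol
\le \tau_i\frac{\|A\|_2}{\gap_i}\sqrt{i+1}\,tol
\le \tau_j\frac{\|A\|_2}{\gap_j}\sqrt{j+1}\,tol < 0.025 ,
\]
where the last inequality is \eqref{eq:assumption0} divided by $4$.

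With this in hand I would prove (i) by induction. The base case is $\omega_1=0$ (a single unit vector), so $\tau_1\omega_1=0$ and $c_1=1$. For the inductive step, assuming $\tau_i\omega_i<0.11$ (hence $c_i<2$), I apply Theorem~\ref{thm:omg} at step $i$: since $c_i<2$ and $\tfrac{c_i}{\gap_i}\|E_{i+1}\|_\F<2\cdot 0.025=0.05$ by the master estimate, the correction factor $1+2\tfrac{c_i}{\gap_i}\|E_{i+1}\|_\F$ stays below $1.1$, giving $\omega_{i+1}<4.4\,\tfrac{\|A\|_2}{\gap_i}\sqrt{i+1}\,tol$. Multiplying by $\tau_{i+1}$ and invoking the same monotonicity (now with $\tau_{i+1}\le\tau_j$) gives $\tau_{i+1}\omega_{i+1}<4.4\cdot 0.025=0.11$, which closes the induction and, with $c_{i+1}=(1-\tau_{i+1}\omega_{i+1}/\sqrt2)^{-1}<2$, establishes (i). Reading the same chain at $i=j$ gives (ii): $\omega_{j+1}<4.4\,\tfrac{\|A\|_2}{\gap_j}\sqrt{j+1}\,tol<5\,\tfrac{\|A\|_2}{\gap_j}\sqrt{j+1}\,tol$; in particular $\omega_{j+1}<0.11<1$.

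For (iii) I would feed the bounds $c_j<2$, $\tau_j\ge 1$, and $\omega_{j+1}<0.11$ into Theorem~\ref{thm:backErr} together with $\|E_{j+1}\|_\F\le\sqrt{j+1}\,tol\,\|A\|_2$. Bounding the numerator $1+c_j\tau_j(1+\omega_{j+1})<1+2\cdot 1.11\,\tau_j\le 3.22\,\tau_j$ (using $1\le\tau_j$ to absorb the constant) and the denominator $\sqrt{1-\omega_{j+1}}>\sqrt{0.89}$, the leading coefficient $\sqrt2\,\bigl(1+c_j\tau_j(1+\omega_{j+1})\bigr)/\sqrt{1-\omega_{j+1}}$ is below $5\tau_j$, which is exactly the claimed bound \eqref{eq:deltabnd0}.

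I expect the main obstacle to be the self-referential structure of Theorems~\ref{thm:omg} and~\ref{thm:backErr}: the step-$j$ bounds depend on $\omega_j$ through $c_j$, so they cannot be applied in isolation and must be propagated by induction. The lower bound $\tau_i\ge 1$ is what makes the induction close, since it is the only route to convert the mixed hypothesis on $\tau_j\|A\|_2/\gap_j$ into control of the bare ratio $\|A\|_2/\gap_i$ that governs $\omega_{i+1}$; the rest is bookkeeping to keep the numerical constants consistent across the three parts.
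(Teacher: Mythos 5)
Your proposal is correct and follows essentially the same route as the paper's proof: the same preliminary observations (monotonicity of $\gap_i$ and $\tau_i$, the bound $\tau_i\ge 1$, and $\|E_{i+1}\|_\F\le\sqrt{i+1}\,tol\,\|A\|_2$), the same induction on $\tau_i\omega_i<0.11$ and $c_i<2$ via Theorem~\ref{thm:omg}, and the same applications of Theorems~\ref{thm:omg} and~\ref{thm:backErr} for parts (ii) and (iii), with matching constants. The only cosmetic differences are bookkeeping (you normalize the assumption to $0.025$ where the paper carries the factor $4$ explicitly) and that you spell out the short argument for $\tau_i\ge 1$, which the paper asserts by definition.
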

\begin{proof}
First observe that 
by the definitions~\eqref{eq:gapdef} and~\eqref{eq:taudef},
$\gamma_i$ is monotonically decreasing with the index $i$ and 
$\tau_i\geq 1$ is monotonically increasing with $i$.
Therefore, the assumption~\eqref{eq:assumption0} implies 
the inequalities 
\begin{align}  \label{eq:assumption12}
\frac{\|A\|_2}{\gap_i}\cdot 4\sqrt{i+1}\cdot tol < 0.1 
\quad\text{and}\quad 
\tau_{i}
\frac{\|A\|_2}{\gap_{i-1}}\cdot 4\sqrt{i}\cdot tol < 0.1
\quad
\text{for all $i\leq j$}.
\end{align}
Since the stopping criterion \eqref{eq:jtol} 
of EIGSOL implies 
\begin{equation}\label{eq:eip1}
\|E_{i}\|_\F = \| [\eta_1,\dots,\eta_{i}]\|_\F
\leq \sqrt{i}\cdot tol\cdot \|A\|_2, 
\end{equation}
inequalities~\eqref{eq:assumption12} leads to 
\begin{align}  \label{eq:assumption122}
\frac{4}{\gap_i}\cdot\|E_{i+1}\|_\F< 0.1
\quad\text{and}\quad 
\tau_{i}\cdot 
\frac{4}{\gap_{i-1}}\cdot \|E_i\|_\F < 0.1
\quad \text{for all $i\leq j$}.
\end{align}

\eqref{lem:wd:a} We prove the inequality \eqref{eq:tauiomega} by induction.
To begin with, recall that $\|\widehat v_1\|_2 =1$, which implies
$\omega_1 = \|\widehat v_1^T\widehat v_1 -1\|_F =0$,
$\tau_1\omega_1 =0< 0.11$, and $c_1 = 1 < 2$. 
Hence \eqref{eq:tauiomega} holds for $i=1$.
Now, for $2\leq i\leq j$, assume that $\tau_{i-1}\omega_{i-1} < 0.11$ and $c_{i-1} < 2$.
Since $\tau_{i-1}\omega_{i-1} < 0.11$,
we can apply Theorem~\ref{thm:omg}
and derive from \eqref{eq:omgbnd} that
\begin{equation}\label{eq:gw}
\tau_{i}\omega_{i}
 \leq \tau_{i}
\cdot  \frac{2c_{i-1}}{\gap_{i-1}}\|E_{i}\|_\F\cdot 
\left( 1 + \frac{2c_{i-1}}{\gap_{i-1}}\|E_{i}\|_\F \right) 
< 0.1 \cdot (1+0.1)  = 0.11,
\end{equation}
where the last inequality of \eqref{eq:gw} is by $2c_{i-1}<4$ and~\eqref{eq:assumption122}.
This implies immediately
\[
c_i = (1 - \tau_i\omega_i/\sqrt{2})^{-1} \leq (1 - 0.11/\sqrt{2})^{-1} < 2.
\]
Therefore,~\eqref{eq:tauiomega} follows by induction.

\eqref{lem:wd:b} 
Since we have $\tau_j\omega_j < 0.11$ and $c_j < 2$ by~\eqref{eq:tauiomega},
we can apply Theorem~\ref{thm:omg} and derive from \eqref{eq:omgbnd} that 
\begin{equation}\label{eq:omegajp1}
\omega_{j+1}
 \leq \frac{2c_{j}}{\gap_{j}}\cdot \|E_{j+1}\|_\F\cdot 
\left( 1 + \frac{2c_{j}}{\gap_{j}}\|E_{j+1}\|_\F \right) 
 \leq \frac{4}{\gap_{j}}\cdot \|E_{j+1}\|_\F \cdot
 \left( 1 + 0.1 \right), 
 \end{equation}
where in the second inequality we used $2c_j < 4$ 
and the first inequality in~\eqref{eq:assumption122}.
Recall the error bound of $\|E_{j+1}\|_\F$ from~\eqref{eq:eip1} 
and we obtain~\eqref{eq:omgbnd0}.

\eqref{lem:wd:c} 
We have $\tau_j\omega_j < 0.11$ and $c_j < 2$ by~\eqref{eq:tauiomega}.
It also follows from~\eqref{eq:omegajp1} and~\eqref{eq:assumption122}
that $\omega_{j+1} <0.11$.
Therefore, we can apply Theorem~\ref{thm:backErr} and 
derive from~\eqref{eq:backErrBnd} that
\[
\delta_{j+1}
\leq \sqrt{2}\left( \frac{1 + c_j\tau_j (1+ \omega_{j+1})}
     {\sqrt{1-\omega_{j+1}}} \right)\|E_{j+1}\|_{\F} 
\leq \sqrt{2}\left( \frac{1 + 2\tau_j(1+ 0.11)}
     {\sqrt{1-0.11}} \right)\cdot
     \|E_{j+1}\|_{\F},
 \]
 where in second inequality we used $0\leq \omega_{j+1}<0.11$.
 Since $\tau_j \geq 1$ by definition~\eqref{eq:taudef},
 we can relax the leading constant as 
 $\sqrt{2}(1.06 + 2.36 \cdot \tau_j) \leq 
 \sqrt{2} (3.42\cdot \tau_j) < 5 \tau_j$.
 Recall the error bound of $\|E_{j+1}\|_\F$ from~\eqref{eq:eip1} 
 and we prove~\eqref{eq:deltabnd0}.
\end{proof}

By the error bounds \eqref{eq:omgbnd0} and \eqref{eq:deltabnd0}
in Lemma~\ref{lem:wd}, we can see that the 
quantities $\gamma^{-1}_j\|A\|_2$ and $\tau_j$ play 
important roles for the stability of the EED procedure.
A sufficient condition to achieve the backward stability 
\eqref{eq:goal1} and \eqref{eq:goal2} is given by
$\gap^{-1}_j\|A\|_2 = O(1)$ and $\tau_j = O(1)$.  
In summary, we have the following theorem
for the backward stability of the EED procedure.

\begin{theorem} \label{thm:eedstable} 
Under the assumptions of   
the residual norm $\|\eta_{i}\|_{2}$ of EIGSOL 
satisfying \eqref{eq:toldef} and the inequality \eqref{eq:assumption0},
the backward stability of the EED procedure, 
in the sense of \eqref{eq:goal1} and \eqref{eq:goal2},
is guaranteed if the shifts $\sigma_1,\dots,\sigma_j$ are 
dynamically chosen such that 
\begin{equation} \label{eq:conditions}
\gap_j^{-1} \|A\|_2 = O(1)  
\quad\text{and}\quad
\tau_j = O(1). 
\end{equation}
\end{theorem}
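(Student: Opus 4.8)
The plan is to show that Theorem~\ref{thm:eedstable} is essentially a direct consequence of the error bounds~\eqref{eq:omgbnd0} and~\eqref{eq:deltabnd0} already established in Lemma~\ref{lem:wd}. Since the hypotheses of the theorem---namely~\eqref{eq:toldef} and the assumption~\eqref{eq:assumption0}---are exactly the hypotheses of Lemma~\ref{lem:wd}, I can invoke that lemma immediately to obtain the two quantitative bounds
\begin{align*}
\omega_{j+1} \le \left(\gap_j^{-1}\|A\|_2\cdot 5\sqrt{j+1}\right)\cdot tol
\quad\text{and}\quad
\delta_{j+1} \le \left(\tau_j\cdot 5\sqrt{j+1}\right)\cdot tol\cdot\|A\|_2.
\end{align*}
The remaining work is then purely to read off the asymptotic consequences of imposing the two stability conditions in~\eqref{eq:conditions}.

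First I would handle the loss of orthogonality. Substituting the condition $\gap_j^{-1}\|A\|_2 = O(1)$ into the bound for $\omega_{j+1}$ gives $\omega_{j+1} \le O(\sqrt{j+1})\cdot tol$, so that the constant multiplying $tol$ is a low-degree (indeed degree-$\tfrac12$) polynomial in $j$. This is precisely the backward-stability requirement~\eqref{eq:goal1}, namely $\omega_{j+1} = O(tol)$ with a polynomially-bounded constant. Next I would treat the backward error norm: substituting $\tau_j = O(1)$ into the bound for $\delta_{j+1}$ yields $\delta_{j+1} \le O(\sqrt{j+1})\cdot tol\cdot\|A\|_2$, which matches the requirement~\eqref{eq:goal2}, namely $\delta_{j+1} = O(tol\cdot\|A\|_2)$ with the constant again a low-degree polynomial in $j$. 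Together these two displays establish the conclusion of the theorem.

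I would also briefly note that the two conditions in~\eqref{eq:conditions} are mutually compatible and consistent with the standing assumption~\eqref{eq:assumption0}: the quantity $\tau_j\cdot\gap_j^{-1}\|A\|_2$ appearing in~\eqref{eq:assumption0} is, under~\eqref{eq:conditions}, of order $O(1)$, so that~\eqref{eq:assumption0} reduces to the benign requirement $4\sqrt{j+1}\cdot tol = O(tol) \ll 1$, which holds for any reasonable tolerance and moderate number of steps. This confirms that Lemma~\ref{lem:wd} is genuinely applicable whenever~\eqref{eq:conditions} is enforced, closing the logical loop.

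The proof carries essentially no analytic obstacle, since all the hard estimation---the recursive control of $\tau_i\omega_i$ and $c_i$ via the induction in Lemma~\ref{lem:wd}, and the appeals to Theorems~\ref{thm:omg} and~\ref{thm:backErr}---has already been carried out. The only point requiring a little care is the bookkeeping of the big-$O$ constants: I must make explicit that ``$O(1)$'' in~\eqref{eq:conditions} means bounded independently of $j$, so that when it is absorbed into the $5\sqrt{j+1}$ factors the resulting constant in~\eqref{eq:goal1} and~\eqref{eq:goal2} remains a low-degree polynomial in $j$ as the definition of backward stability demands. Thus the main (and essentially only) task is to verify that the two conditions~\eqref{eq:conditions} convert the explicit bounds of Lemma~\ref{lem:wd} into the forms~\eqref{eq:goal1} and~\eqref{eq:goal2}.
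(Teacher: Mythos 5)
Your proposal is correct and follows exactly the paper's own route: the theorem is stated as an immediate consequence of the bounds \eqref{eq:omgbnd0} and \eqref{eq:deltabnd0} in Lemma~\ref{lem:wd}, with the conditions \eqref{eq:conditions} substituted in so that the constants reduce to $O(\sqrt{j+1})$, a low-degree polynomial in $j$ as required by \eqref{eq:goal1} and \eqref{eq:goal2}. Your closing observation that \eqref{eq:assumption0} becomes a mild requirement under \eqref{eq:conditions} likewise mirrors the remark the paper makes immediately after the theorem.
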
 

We note that when the shifts $\sigma_j$ are 
dynamically chosen such that the conditions \eqref{eq:conditions}
are satisfied, the assumption of 
the inequality ~\eqref{eq:assumption0} is indeed mild.

\begin{remark} \label{remark:instability}
{\rm 
From the upper bound \eqref{eq:omgbnd0} of 
the loss of orthogonality $\omega_{j+1}$,
we see that if the spectral gap $\gap_j$ is too small, 
i.e., $\gap_j \ll \|A\|_2$, then
$\omega_{j+1}$ could be amplified 
by a factor of $\gap_j^{-1}\|A\|_2$. 
On the other hand, 
from the upper bound \eqref{eq:deltabnd0} of 
the symmetric backward error norm $\delta_{j+1}$,
we see that when $\tau_j$ is too large, i.e., $\tau_j \gg 1$, 
$\delta_{j+1}$ could be amplified by a factor of $\tau_j$.
We will demonstrate these observations numerically in 
Example~\ref{eg:eg2} in Section~\ref{sec:examples}.
}\end{remark}

%
%
\section{EED in practice}  \label{sec:EEDpractice} 

In Section~\ref{sec:conditions},
we derived the required conditions \eqref{eq:conditions} 
of the spectral gap $\gap_j$ and the shift-gap ratio $\tau_j$
for the backward stability of the EED procedure.
The quantities $\gap_j$ and $\tau_j$ are controlled by 
the selection of the shifts $\sigma_1,\dots,\sigma_j$.  
In this section, we discuss a practical selection scheme of the shifts 
to satisfy the conditions \eqref{eq:conditions}, and then present
an algorithm that combines an eigensolver EIGSOL with the EED
procedure to compute eigenvalues in a prescribed interval
$\mathcal{I} = [\lam_{\rm low}, \lam_{\rm upper}]$.

\subsection{Choice of the shifts}  \label{sec:shifts}

We consider the following 
choice of the shift at the $j$-th EED step, 
\begin{align}  \label{eq:choice}
\sigma_j = \mu - \fl{\lam}_j,
\end{align}
where  $\mu\in\R$ is a parameter with $\mu > \lam_{\rm upper}$. 
The shifting scheme~\eqref{eq:choice} has been
used in several previous works, although without 
elaboration on the choice of parameter $\mu$; see 
\cite[Chap.5.1]{Parlett1998} and \cite{Money2005,Yamazaki2019}.
In the following, we will discuss how to choose the parameter 
$\mu$ such that the conditions~\eqref{eq:conditions} can hold.

To begin with, the choice of the shifts in~\eqref{eq:choice}  implies
that the spectral gap $\gap_j$ in \eqref{eq:gapdef} satisfies
\begin{align}  \label{eq:gapj_again}
\gap_j = \min_{ \theta\in\mc{I}_{j+1},\ \lam\in\mc{J}_{j} } |\lam-\theta|
	  = \min_{1\leq i\leq j+1} |\mu - \fl{\lam}_i|,
\end{align}
where 
$\mc{I}_{j+1} = \{\fl{\lam}_1,\ldots, \fl{\lam}_{j}, \fl{\lam}_{j+1}\}$
and $\mc{J}_{j} = \{\fl{\lam}_1+\sigma_1,\ldots,\fl{\lam}_{j}+\sigma_j\} = \{\mu\}$.
On the other hand, it also implies that the shift-gap ratio $\tau_j$ in \eqref{eq:taudef} 
satisfies 
\begin{align}  \label{eq:tauj_again}
\tau_j = \frac{1}{\gap_j} \cdot \max_{1\leq i\leq j} |\sigma_i| 
= \frac{\max_{1\leq i\leq j} |\mu - \fl{\lam}_i| }{\min_{1\leq i\leq j+1} |\mu - \fl{\lam}_i|}. 
\end{align} 
Now recall that $\mu > \lam_{\rm upper}$ and the computed eigenvalues 
$\fl{\lam}_i \in  [\lam_{\rm low}, \lam_{\rm upper}]$, for $i=1,2,\dots,j+1$,
so we have 
\begin{align*}
\mu - \lam_{\rm upper} 
\leq \min_{1\leq i\leq j+1} |\mu - \fl{\lam}_i| 
\leq\max_{1\leq i\leq j+1} |\mu - \fl{\lam}_i| 
\leq \mu - \lam_{\rm low}.
\end{align*}
Hence, \eqref{eq:gapj_again} and \eqref{eq:tauj_again} lead to 
\begin{align}  \label{eq:unibnds}
\gap_g \leq \gap_j \leq \gap_g\,\tau_g
\quad \mbox{and} \quad
\tau_j \leq \tau_g, 
\end{align} 
where 
\begin{equation*}  \label{eq:gparams}
\gap_g\equiv\mu - \lam_{\rm upper}
\quad \mbox{and} \quad 
\tau_g\equiv\frac{\mu - \lam_{\rm low}}{\mu - \lam_{\rm upper}}.
\end{equation*}
The problem is then turned to the choice of parameter $\mu$ 
such that the quantities $\gap_g$ and $\tau_g$ 
to satisfy the conditions~\eqref{eq:conditions}.

Let us consider a frequently encountered case in practice where
the width of the interval $\mathcal{I} = [\lam_{\rm low}, \lam_{\rm upper}]$ 
satisfies
$$
\lam_{\rm upper} - \lam_{\rm low} \leq \frac{1}{2}\|A\|_2.
$$  
Then by setting
\begin{equation*}  \label{eq:choice2}
\mu = \fl{\lam}_{1} + \|A\|_2,
\end{equation*} 
we have
\begin{equation*}  \label{eq:gpest}
\frac{1}{2}\|A\|_2 \leq 
\gap_g = 
\left( 1 - \frac{\lam_{\rm upper} - \fl{\lam}_1}{\|A\|_2} \right) 
\|A\|_2
\leq \|A\|_2 
\end{equation*} 
and 
\begin{equation*}  \label{eq:gpest2}
\tau_g = 1 + \frac{\lam_{\rm upper} - \lam_{\rm low}}{\gamma_g}
\leq 2.
\end{equation*}
Consequently, by \eqref{eq:unibnds},
the spectral gap $\gap_j$ and the shift-gap ratio 
$\tau_j$ satisfy the desired conditions \eqref{eq:conditions}.

In summary, for the EED procedure in practice,  
we recommend the use of the following strategy 
for the choice of the shift $\sigma_j$ 
at the $j$-th EED,
\begin{equation} \label{eq:shiftchoicefinal} 
\sigma_j = \mu - \fl{\lam}_j
\quad \mbox{with} \quad 
\mu = \fl{\lam}_{1} + \|A\|_2,
\end{equation} 
to compute the eigenvalues in an interval 
$\mc{I} = [\lam_{\rm low},\lam_{\rm upper}]$, 
where $\lam_{\rm upper} - \lam_{\rm low} \leq \frac{1}{2}\|A\|_2$.


\subsection{Algorithm}  \label{sec:algorithm}

An eigensolver EIGSOL combined with the EED procedure to 
compute the eigenvalues in an interval 
$\mc{I} = [\lam_{\rm low},\lam_{\rm upper}]$
at the lower end of the spectrum of $A$
is summarized in Algorithm \ref{alg:EIGSOL_EED}.

\begin{algorithm}[H]
\caption{
An eigensolver EIGSOL combined with the EED procedure
}
\label{alg:EIGSOL_EED}
\begin{algorithmic}[1] 
\REQUIRE{ the interval $\mc{I} = [\lam_{\rm low},\lam_{\rm upper}]$ 
at the lower end of the spectrum of $A$; the relative tolerance $tol$ 
in \eqref{eq:toldef} for EIGSOL.  
}
\ENSURE{ the approximate eigenpairs $(\fl{\lam}_i,\fl{v}_i)$ 
in the interval $\mc{I}$.
}

\STATE{$\fl{A}_0 = A$;}

\STATE{use EIGSOL to compute the lowest eigenpair 
$(\fl{\lam}_1,\fl{v}_1)$ of $\fl{A}_0$ and an estimate 
{\tt Anorm} of $\|A\|_2$;}

\STATE{$\mu = \fl{\lam}_{1} + {\tt Anorm}$;  \label{algline:setmu}}

\FOR{$j = 1, 2, \ldots$}

\STATE{$\sigma_j = \mu - \fl{\lam}_j$;  \label{algline:setshifts}}

\STATE{$\fl{A}_j = \fl{A}_{j-1} + \sigma_j\fl{v}_j\fl{v}_j^\T
 = A + \fl{V}_j\Sigma_j\fl{V}_j^\T$;
\label{algline:Ahatj} 
}

\STATE \label{algline:soldeflated}
{compute the lowest eigenpair $(\fl{\lam}_{j+1},\fl{v}_{j+1})$ 
of $\fl{A}_j$ by EIGSOL;}  

\STATE \label{algline:criterion}
{check if all the eigenpairs in the interval $\mc{I}$ have been computed;}

\ENDFOR

\STATE{return the approximate eigenpairs $(\fl{\lam}_i,\fl{v}_i)$ 
in the interval $\mc{I}$;}
\end{algorithmic}
\end{algorithm}

A few remarks are in order:
\begin{itemize}

\item In practice, we never need to form the matrix
$\fl{A}_j$ at step~\ref{algline:Ahatj} explicitly. 
We can assume that the only operation that is required by EIGSOL is the 
matrix-vector product $y:=\fl{A}_jx$. 

\item
At step \ref{algline:soldeflated},
the computation of the lowest eigenpair $(\fl{\lam}_{j+1},\fl{v}_{j+1})$ of $\fl{A}_j$
can be accelerated by warm starting the EIGSOL with
the lowest unconverged eigenvectors of $\fl{A}_{j-1}$.
This is possible for most iterative eigensolvers, 
such as TRLan \cite{Wu2000} and ARPACK \cite{Lehoucq1998}.  

\item
At step \ref{algline:criterion}, 
an ideal validation method is to 
use the inertias of the shifted matrix $A - \lam_{\rm upper} I$. 
However, computation of the inertias is a prohibitive cost 
for large matrices. An empirical validation is to monitor 
the lowest eigenvalue $\fl{\lam}_{j+1}$ of $\fl{A}_j$.
All eigenpairs in the interval $\mc{I}$ are considered 
to be found when $\fl{\lam}_{j+1}$ is outside the interval $\mc{I}$.
\end{itemize}

%
%
\section{Numerical experiments}  \label{sec:examples}

In this section, we first use synthetic examples to verify 
the sharpness of the upper bounds \eqref{eq:omgbnd} 
and \eqref{eq:backErrBnd} on the loss of orthogonality 
and the symmetric backward error norm of the EED procedure
under the choice \eqref{eq:shiftchoicefinal} of the shifts $\sigma_j$.
We present the cases where improper choices of the shifts $\sigma_j$ 
may lead to numerical instability of the EED procedure.
Then we demonstrate the numerical stability 
of the EED procedure for a set of large sparse symmetric matrices arising 
from applications.

\PARAGRAPH{(EIGSOL+EED) = (TRLan+EED).} 
We use TRLan as the eigensolver. 
TRLan is a C implementation of the thick-restart Lanczos 
method with adaptive sizes of the projection 
subspace~\cite{Wu2000,Yamazaki2010,Yamazaki2008}. 
The convergence criterion of an approximate 
eigenpair $(\fl{\lam}_{j+1},\fl{v}_{j+1})$ is the residual norm satisfying
%
%
\[ 
\|\eta_{j+1}\|_2  
=\|\widehat{A}_{j} \fl{v}_{j+1} - \fl{\lam}_{j+1}\fl{v}_{j+1} \|_2 
< tol\cdot {\tt Anorm},
\]
where $tol$ is a user-specified tolerance
and {\tt Anorm} is a $2$-norm estimate of $A$ computed by TRLan.
The starting vector is a random vector.



\begin{example} \label{eg:eg1} 
{\rm  
In this example, 
we demonstrate the sharpness of the upper bounds \eqref{eq:omgbnd} 
and \eqref{eq:backErrBnd} on the loss of orthogonality
and the symmetric backward error norm with the choice \eqref{eq:shiftchoicefinal}
of the shifts $\sigma_j$.

We consider a diagonal matrix $A$ with diagonal elements 
\begin{align*}
a_{kk} = 
\begin{cases}
\frac{1}{2}d_{k},\quad\text{if $1\leq k\leq n/2 $}, \\
\frac{1}{2}(1 + d_{k-n/2 }),\quad\text{if $n/2 < k \leq n$},
\end{cases}
\end{align*}
where $d_{k} = 10^{-5(1-\frac{k-1}{n/2 - 1})}$ and 
the matrix size $n=500$. The spectrum range of $A$ is $(0,1]$. 
The eigenvalues of $A$ are clustered around $0$ and $0.5$.
We are interested in computing the $n_e = 65$ eigenvalues in the interval 
$\mc{I} =  [0,\, 10^{-4}]$. 
The computed $2$-norm of $A$ is ${\tt Anorm} = 1.00$.

To closely observe the convergence, 
TRLan is slightly modified so that
the convergence test is performed at each Lanczos iteration.
The maximal dimension $m$ of the projection subspace is set to be $40$.

Numerical results of TRLan with the EED procedure
for computing all the $n_e$ eigenvalues in the interval $\mc{I}$
are summarized in Table~\ref{tab:stable}, where
the 4th column is the loss of orthogonality $\omega_{n_e}$,
the 5th column is the upper bound \eqref{eq:omgbnd} of $\omega_{n_e}$,
the 6th column is the residual norm $\|R_{n_e}\|_\F$
and the 7th column is the upper bound \eqref{eq:backErrBnd} of $\delta_{n_e}$.
Note that here we use the quantities $\delta_{n_e}$ and $\|R_{n_e}\|_\F$ 
interchangeably as discussed in Remark~\ref{rmk:delres}.

From Table~\ref{tab:stable}, we observe that 
with the choice \eqref{eq:shiftchoicefinal} of the shifts $\sigma_j$, 
$\gap^{-1}_g {\tt Anorm} \approx 1$ and $\tau_g \approx 1$.
Therefore, the conditions \eqref{eq:conditions} 
of the spectral gap $\gap_j$ and the shift-gap ratio $\tau_j$
for the backward stability are satisfied.
Consequently, the loss of orthogonality of the computed eigenvectors 
is $\omega_{n_e} = O(tol)$ and
the symmetric backward error norm of the 
computed eigenpairs $(\fl{\Lam}_{n_e},\fl{V}_{n_e})$ is 
$\delta_{n_e} = O(tol\cdot{\tt Anorm})$.
In addition, we observe that the 
upper bounds \eqref{eq:omgbnd} and \eqref{eq:backErrBnd} of
$\omega_{n_e}$ and $\delta_{n_e}$
are tight within an order of magnitude.

\begin{table}
\caption{
Numerical stability of TRLan with EED for different tolerances $tol$
(Example~\ref{eg:eg1})
} \label{tab:stable}
\centering
\begin{tabular}{| c | c ||  c || c | c || c | c |} \hline
$tol$ & $\mu$ & $\gap_g$ &
$\omega_{n_e}$ & bound \eqref{eq:omgbnd} &
$\|R_{n_e}\|_\F$ & bound \eqref{eq:backErrBnd}  \\ \hline \hline
$10^{-6}$ & 1.00 & $1.00$ &
$2.37\cdot 10^{-6}$ & $1.59\cdot 10^{-5}$ &
$7.87\cdot 10^{-6}$ & $2.24\cdot 10^{-5}$  \\
$10^{-8}$  & 1.00& $1.00$ &
$1.78\cdot 10^{-8}$ & $1.58\cdot 10^{-7}$ &
$7.95\cdot 10^{-8}$ & $2.24\cdot 10^{-7}$  \\
$10^{-10}$  & 1.00& $1.00$ &
$1.82\cdot 10^{-10}$ & $1.58\cdot 10^{-9}$ &
$7.94\cdot 10^{-10}$ & $2.24\cdot 10^{-9}$ \\ \hline
\end{tabular}
\end{table}

\begin{figure}
\centering
\includegraphics[width=0.50\textwidth,trim=0mm 0mm 15mm 11mm,clip=true]
{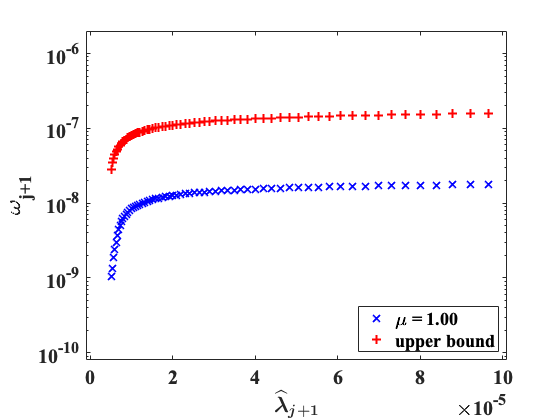}
\caption{The loss of orthogonality $\omega_{j+1}$ 
and the upper bound \eqref{eq:omgbnd} of $\omega_{j+1}$ 
against the 
computed eigenvalues $\fl{\lam}_{j+1}$
for $2\leq j+1 \leq n_e$, $tol = 10^{-8}$
(Example~\ref{eg:eg1}). 
} \label{fig:synthetic_omgs}
\end{figure}


}\end{example} 
 
\begin{example} \label{eg:eg2} 
{\rm  
In this example we illustrate that 
improperly chosen shifts $\sigma_j$ 
may lead to instability of the EED procedure. 

In Remark~\ref{remark:instability}, we stated that
if the shifts $\sigma_j$ are chosen 
such that the spectral gaps $\gap_j$ are too small, 
i.e., $\gap_j \ll {\tt Anorm}$, then 
the loss of orthogonality of the  computed
eigenvectors could be amplified by a factor of $\gap_j^{-1}\cdot{\tt Anorm}$. 
Here is an example using the same diagonal matrix $A$ in Example~\ref{eg:eg1}. 
The combination of TRLan and EED is used
to compute the $n_e=65$ eigenvalues in the interval 
$\mc{I} =  [0,\, 10^{-4}]$.
Let us set the shifts $\sigma_j = \mu - \fl{\lam}_j$ 
with $\mu = 2\cdot10^{-4}$, which is much smaller than the recommended
value of $\mu = \fl{\lam}_{1} + \|A\|_2 \approx 1.00$.
Numerical results are summarized in Table \ref{tab:unstable1}, 
where the tolerance $tol = 10^{-8}$ for TRLan. 
We observe that $\gap_j = O(\gap_g) \ll {\tt Anorm}$, 
and the loss of orthogonality of the  computed eigenvectors is indeed amplified
by a factor of $\gap_g^{-1}\cdot{\tt Anorm}$. 
We note that since $\tau_j = O(1)$, the symmetric backward error norms 
$\delta_{n_e} = O(tol\cdot{\tt Anorm})$. 

\begin{table}
\caption{
Instability of TRLan with EED 
when the spectral gaps $\gap_j = O(\gamma_g)$ are too small.
} \label{tab:unstable1}
\centering
\begin{tabular}{| c | c ||  c || c | c || c | c |} \hline
$tol$ & $\mu$ & $\gap_g$ &
$\omega_{n_e}$ & bound \eqref{eq:omgbnd} &
$\|R_{n_e}\|_\F$ & bound \eqref{eq:backErrBnd} \\ \hline \hline
$10^{-6}$ & $2\cdot10^{-4}$ & $10^{-4}$ &
$8.26\cdot 10^{-3}$ & $1.79\cdot 10^{-1}$ &
$8.00\cdot 10^{-6}$ & $3.29\cdot 10^{-5}$  \\
$10^{-8}$ & $2\cdot10^{-4}$ & $10^{-4}$ &
$8.28\cdot 10^{-5}$ & $1.53\cdot 10^{-3}$ &
$7.96\cdot 10^{-8}$ & $3.22\cdot 10^{-7}$  \\
$10^{-10}$ & $2\cdot10^{-4}$ & $10^{-4}$ &
$8.27\cdot 10^{-7}$ & $1.52\cdot 10^{-5}$ &
$7.95\cdot 10^{-10}$ & $3.22\cdot 10^{-9}$  \\
\hline
\end{tabular}
\end{table}

\medskip
In Remark~\ref{remark:instability}, we also stated that
if the shifts $\sigma_j$ are chosen such that the shift-gap ratios
$\tau_j$ are too large, then the symmetric backward error norm 
$\delta_j$ could be amplified by a factor of $\tau_j$. 
Here is an example. We flip the sign of the diagonal elements 
of $A$ defined in Example~\ref{eg:eg1}, and set $n = 200$. 
We compute $n_e = 74$ eigenvalues in the interval 
$\mc{I} = [-1.0, -0.5001]$
using TRLan with EED procedure.  The computed $2$-norm of $A$ 
is ${\tt Anorm} = 1.00$.

\begin{figure}
\centering
\includegraphics[width=0.48\textwidth,trim=0mm 0mm 0mm 11mm,clip=true]
{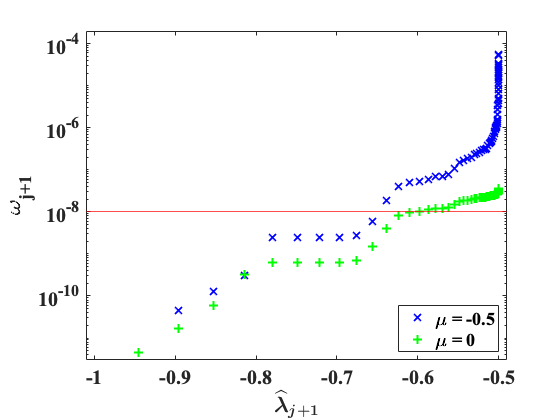}
\includegraphics[width=0.48\textwidth,trim=0mm 0mm 0mm 11mm,clip=true]
{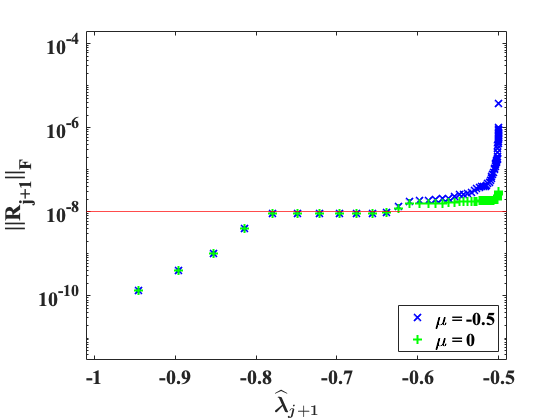}
\caption{
The loss of orthogonality $\omega_{j+1}$ (left) 
and the residual norm $\|R_{j+1}\|_\F$ (right) 
against the computed eigenvalues $\fl{\lam}_{j+1}$
for $2\leq j+1 \leq n_e$. The red lines are 
$tol$ (left) and $tol\cdot {\tt Anorm}$ (right).
}\label{fig:example2}
\end{figure}

Instead of the choice \eqref{eq:shiftchoicefinal}
for the shifts $\sigma_j$, we set $\sigma_j = \mu - \fl{\lam}_j$ 
with $\mu = -0.5$.
The blue $\times$-lines in Figure~\ref{fig:example2} are 
the loss of orthogonality and the residual norms for the computed 
eigenpairs $(\fl{\lam}_{j+1},\fl{v}_{j+1})$ for $2 \leq j+1 \leq n_e$. 
As we observe that for the first $6$ computed eigenvalues in the 
subinterval $[-1.0,-0.75]$ of $\mc{I}$, 
since the spectral gap $\gap_j \geq 0.25$ and 
the shift-gap ratio $\tau_j \leq 2$, 
the computed eigenpairs are backward stable 
with $\omega_{6} = 2.48\cdot 10^{-9} = O(tol)$
and $\|R_{6}\|_\F = 9.05\cdot 10^{-9} = O(tol\cdot {\tt Anorm})$.
However, for the computed eigenvalues in the 
subinterval $[-0.75, -0.5001]$ of $\mc{I}$,  
the computed eigenpairs are not backward stable
due to the facts that the spectral gaps $\gap_j$ 
become small, $\gap_j \approx 1.03\cdot 10^{-4}$,
and the shift-gap ratios $\tau_j$ grows up to 
$\tau_j \approx 4.86\cdot10^3$.
Consequently, the loss of orthogonality $\omega_{n_e}$ 
and the residual norm $\|R_{n_e}\|_\F$ are 
increased by a factor of up to $10^3$, respectively.
The stability are restored if the 
shifts are chosen according to 
the recommendation \eqref{eq:shiftchoicefinal} as shown
by the green $+$-lines in Figure~\ref{fig:example2}.
} 
\end{example}


\begin{example} \label{eg:eg3} 
{\rm  
In this example, we demonstrate the numerical stability of 
TRLan with the EED procedure 
for a set of large sparse symmetric matrices from applications.

The statistics of the matrices are summarized in Table \ref{tab:realproblems}, 
where $n$ is the size of the matrix, 
nnz is the number of nonzero entries of the matrix,
$[\lam_{\min}, \lam_{\max}]$ is the spectrum range,
and $n_e$ is the number of eigenvalues in the interval 
$\mc{I} = [\lam_{\rm low}, \lam_{\rm upper}]$. 
The quantities $n_e$ are calculated 
by computing the inertias of the shifted matrices 
$A - \lam_{\rm upper}I$. 
The {\tt Laplacian} is the negative 2D Laplacian on 
a $200$-by-$200$ grids with Dirichlet boundary 
condition \cite{Knyazev2010laplace}.
The {\tt worms20} is the graph Laplacian worms20\_10NN 
in machine learning datasets \cite{davis2011}.
The 
{\tt SiO, Si34H36, Ge87H76} and {\tt Ge99H100} 
are Hamiltonian matrices from PARSEC collection \cite{davis2011}.

\begin{table}
\caption{Statistics of the test matrices.}  
\label{tab:realproblems}
\centering
\begin{tabular}{| c | c | c | c | c | c |}
\hline
matrix & $n$ & nnz & $[\lam_{\min}, \lam_{\max}]$ & 
$[\lam_{\rm low}, \lam_{\rm upper}]$ 
& $n_e$  \\ \hline\hline  
{\tt Laplacian}  & $40,000$ & $199,200$ & $[0,7.9995]$  &  $[0, 0.07]$  & $205$  \\
{\tt worms20}  & $20,055$ & $260,881$ & $[0,6.0450]$ & $[0,0.05]$ & $289$  \\
{\tt SiO}    & $33,401$ & $1,317,655$ & $[-1.6745,84.3139]$ & $[-1.7, 2.0]$  &  $182$  \\
{\tt Si34H36}   & $97,569$ & $5,156,379$ & $[-1.1586,42.9396]$ & $[-1.2,0.4]$ & $310$ \\
{\tt Ge87H76}  & $112,985$ & $7,892,195$ & $[-1.214,32.764]$ & $[-1.3,-0.0053]$ &  $318$ \\
{\tt Ge99H100} & $112,985$ & $8,451,395$ & $[-1.226,32.703]$ & $[-1.3,-0.0096]$ &  $372$ \\
\hline
\end{tabular}
\end{table}

We run TRLan with a maximal number $m$ of Lanczos vectors
to compute the lowest eigenpairs of the matrix $\fl{A}_j$.
The convergence test is performed at each restart of TRLan.
All the converged eigenvalues in the interval $\mc{I}$
are shifted by EED. Meanwhile, we also keep a maximal number $m_0$ of
the lowest unconverged eigenvectors as the starting vectors of
TRLan for the matrix $\fl{A}_{j+1}$. 
All the eigenvalues in $\mc{I}$ are assumed 
to be computed when the lowest converged eigenvalue is outside 
the interval $\mc{I}$. This combination of TRLan 
is referred to as TRLED.

TRLED is compiled using the \verb|icc| compiler (version 2021.1)
with the optimization flag \verb|-O2|,
and linked to BLAS and LAPACK available in Intel Math Kernel Library
(version 2021.1.1). The experiments are conducted on a MacBook
with $1.6$ GHz Intel Core i5 CPU and 8GB of RAM.

For numerical experiments, we set the maximal number of Lanczos vectors 
$m=150$. When starting TRLED for $\fl{A}_{j+1}$, 
the maximal number of the starting vectors is $m_0=75$.
The convergence tolerance for the residual norm was set to 
$tol=10^{-8}$ as a common practice for solving large scale eigenvalue
problems with double precision; see for example \cite{LiXi2016}.

Numerical results of TRLED are summarized in Table \ref{tab:trleed}, where
the 2nd column is the number $\fl{n}_e$ of 
the computed eigenpairs $(\fl{\lam}_i,\fl{x}_i)$ in the interval $\mc{I}$,
the 3rd column is the number $j_{\max}$ of steps of EED performed,
the 4th column is the loss of orthogonality $\omega_{\fl{n}_e}$,
and the 5th column is the relative residual norm 
$\|R_{\fl{n}_e}\|_\F/{\tt Anorm}$ 
of the computed eigenpairs 
$(\fl{\Lambda}_{n_e},\fl{V}_{n_e})$.
From the quantities $n_e$ in Table~\ref{tab:realproblems} 
and $\fl{n}_e$ in Table~\ref{tab:trleed}, 
we see that for all test matrices 
the eigenvalues in the prescribed intervals $\mc{I}$ 
are successfully computed with the desired backward stability. 

\begin{table}
\caption{Numerical results of TRLED.} \label{tab:trleed}
\centering
\begin{tabular}{|*5{c |}| c | c |}
\hline
\multirow{2}{*}{matrix} & \multirow{2}{*}{$\fl{n}_e$}
& \multirow{2}{*}{$j_{\max}$} 
& \multirow{2}{*}{$\omega_{\fl{n}_e}$} 
& \multirow{2}{*}{$\|R_{\fl{n}_e}\|_{\rm F} / {\tt Anorm}$} 
& \multicolumn{2}{|c|}{CPU time (sec.)} \\ \cline{6-7}
& & & & & TRLED & TRLan \\  \hline\hline
{\tt Laplacian} & $205$ & $60$ & $1.93\cdot 10^{-8}$ & $6.33\cdot 10^{-8}$ & $66.5$  & $86.0$ \\  
{\tt worms20}  & $289$ & $86$ & $2.63\cdot 10^{-8}$ & $7.24\cdot 10^{-8}$ & $57.3$  & $74.8$  \\  
{\tt SiO}           & $182$ & $41$ & $2.33\cdot 10^{-8}$ & $4.71\cdot 10^{-8}$ & $42.4$  & $47.1$  \\  
{\tt Si34H36}   & $310$ & $72$ & $3.41\cdot 10^{-8}$ & $7.50\cdot 10^{-8}$ & $309.9$  & $310.4$  \\  
{\tt Ge87H76}  & $318$ & $66$ & $4.08\cdot 10^{-8}$ & $8.50\cdot 10^{-8}$ & $388.7$  & $421.0$  \\
{\tt Ge99H100} & $372$ & $74$ & $3.65\cdot 10^{-8}$ & $7.63\cdot 10^{-8}$ & $501.1$  & $533.4$  \\
\hline
\end{tabular}
\end{table}

The left plot of Figure~\ref{fig:example3} is a profile of the number 
of converged eigenvalues at each external deflation of a total of 74 
EEDs for the matrix {\tt Ge99H100}.  The right plot of 
Figure~\ref{fig:example3} shows the relative residual norms of all 
372 computed eigenpairs in the interval.  We observe that a large 
number of converged eigenvalues are deflated and shifted away 
at some EED steps.

\begin{figure}
\centering
\includegraphics[width=0.48\textwidth,trim=0mm 0mm 0mm 6mm,clip=true]
{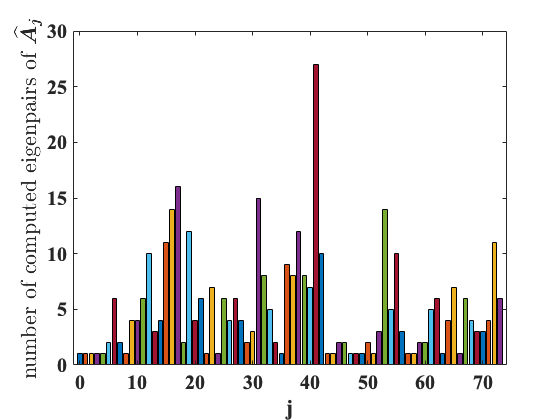} 
\includegraphics[width=0.48\textwidth,trim=0mm 0mm 0mm 6mm,clip=true]
{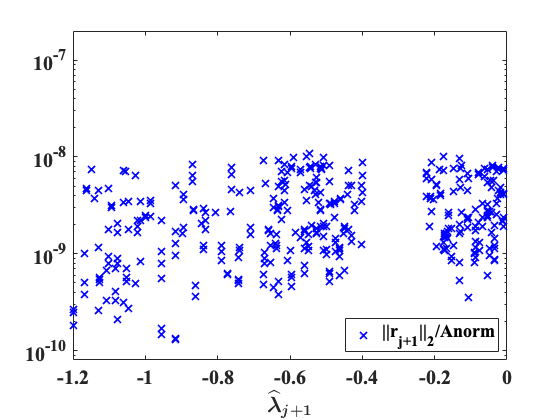} 
\caption{
The number of deflated eigenpairs at each EED for
the matrix {\tt Ge99H100} (left). The relative residual norms of 
372 computed eigenpairs (right).
}\label{fig:example3}
\end{figure}

To examine whether the multiple explicit external deflations lead to 
a significant increase in execution time, in the 6th and 7th columns 
of Table~\ref{tab:trleed}, we record the CPU time of TRLED and TRLan 
for computing all eigenvalues in the same intervals.
For TRLan, we set the maximal number of Lanczos vectors to 
$n_e+150$. The restart scheme with \verb|restart=1| is used. 
TRLan is compiled and executed under the same setting as TRLED.  
%
%
We observe comparable execution time of TRLED and TRLan.
In fact, TRLED is slightly faster. 
We think this might be 
due to the facts that TRLED uses fewer Lanczos vectors,
which compensates the cost of the matrix-vector products 
$y:=\fl{A}_j x$ and the warm start with 
those unconverged eigenvectors from the previous deflated matrix 
$\fl{A}_{j-1}$. These are subjects of future study.
}\end{example}

\section{Concluding remarks} \label{sec:conclude}
Based on the governing equations of the EED procedure in 
finite precision arithmetic, we derived the 
upper bounds on the loss of the orthogonality of the computed eigenvectors 
and the symmetric backward error norm of the computed eigenpairs 
in terms of two key quantities, namely  
the spectral gaps and the shift-gap ratios. 
Consequently, we revealed the required conditions of 
these two key quantities for the backward stability of the EED procedure. 
We present a practical strategy on the  
dynamically selection of the shifts such that 
the spectral gaps and the shift-gap ratios 
satisfy the required conditions. 

There are a number of topics that can be explored further.
One of them is the acceleration effect of the warm starting 
the EIGSOL with the unconverged eigenvectors of $\fl{A}_{j-1}$ for
computing of the lowest eigenpairs of $\fl{A}_j$.
Another one is how to efficiently perform 
the matrix-vector products $ y:=\fl{A}_j x$ in EIGSOL 
by exploiting the sparse plus low rank structure of the
matrix $\fl{A}_j$. Communication-avoid algorithms for 
sparse-plus-low-rank matrix-vector products 
can be found in \cite{Leiserson:1997,Knight:2014}.
A preliminary numerical study for high performance computing 
is reported in \cite{Yamazaki2019}.


%
\bibliographystyle{plain}
\bibliography{EED}

\end{document}